\def\NZQ{\mathbb}               
\def\ZZ{{\NZQ Z}}
\def\RR{{\NZQ R}}
\def\ab{{\mathbf a}}
\def\eb{{\mathbf e}}
\def\tb{{\mathbf t}}
\def\xb{{\mathbf x}}
\def\opn#1#2{\def#1{\operatorname{#2}}} 
\opn\gr{gr}
\def\Ac{{\mathcal A}}
\def\Mc{{\mathcal M}}
\def\Gc{{\mathcal G}}
\def\Qc{{\mathcal Q}}
\newtheorem{Theorem}{Theorem}[section]
\newtheorem{Corollary}[Theorem]{Corollary}
\newtheorem{Proposition}[Theorem]{Proposition}
\theoremstyle{definition}
\newtheorem{Remark}[Theorem]{Remark}
\newtheorem{Conjecture}[Theorem]{Conjecture}
\newtheorem*{acknowledgement}{Acknowledgment}
\let\epsilon\varepsilon
\let\phi=\varphi
\let\kappa=\varkappa
\opn\dis{dis}
\opn\height{height}
\opn\dist{dist}
\def\pnt{{\raise0.5mm\hbox{\large\bf.}}}
\opn\Lex{Lex}
\opn\conv{conv}
\begin{document}

\title{Perfectly contractile graphs and quadratic toric rings}

\author{Hidefumi Ohsugi, Kazuki Shibata and Akiyoshi Tsuchiya}

\address{Hidefumi Ohsugi,
	Department of Mathematical Sciences,
	School of Science,
	Kwansei Gakuin University,
	Sanda, Hyogo 669-1337, Japan} 
\email{ohsugi@kwansei.ac.jp}

\address{Kazuki Shibata,
Department of Mathematics,
College of Science,
Rikkyo University,
Toshima-ku, Tokyo 171-8501, Japan} 
\email{k-shibata@rikkyo.ac.jp }

\address{Akiyoshi Tsuchiya,
Graduate School of Mathematical Sciences,
University of Tokyo,
Komaba, Meguro-ku, Tokyo 153-8914, Japan} 
\email{akiyoshi@ms.u-tokyo.ac.jp}

\subjclass[2010]{}
\keywords{toric ideals, Gr\"obner bases, perfect graphs, stable set polytopes}

\begin{abstract}
Perfect graphs form one of the distinguished classes of finite simple graphs.
In 2006, Chudnovsky, Robertson, Seymour and Thomas proved that a graph is perfect if and only if it has no odd holes and no odd antiholes as induced subgraphs, which was conjectured by Berge.
We consider the class $\Ac$ of graphs that have no odd holes, no antiholes and no odd stretchers as induced subgraphs. In particular, every graph belonging to $\Ac$ is perfect.
Everett and Reed conjectured that a graph belongs to $\Ac$ if and only if it is perfectly contractile.
In the present paper, we discuss graphs belonging to $\Ac$ from a viewpoint of commutative algebra.
In fact, we conjecture that a perfect graph $G$ belongs to $\Ac$ if and only if the toric ideal of the stable set polytope of $G$ is generated by quadratic binomials.
Especially, we show that this conjecture is true for Meyniel graphs,
perfectly orderable graphs, and clique separable graphs, which are 
perfectly contractile graphs.
\end{abstract}

\maketitle

\section*{Introduction}
A graph $G$ is {\em perfect} if every induced subgraph $H$ of $G$ satisfies $\chi(H)=\omega(H)$, where $\chi(H)$ is the chromatic number of $H$ and $\omega(H)$ is the maximum cardinality of cliques of $H$.
Perfect graphs were introduced by Berge in \cite{perfect}.
A {\em hole} is an induced cycle of length $\ge 5$
and an {\em antihole} is the complement of a hole.
In 2006, Chudnovsky, Robertson, Seymour and Thomas solved a famous conjecture in graph theory, which was conjectured by Berge and is now known as the Strong Perfect Graph Theorem:

\smallskip

\noindent
{\bf The Strong Perfect Graph Theorem}
(\cite{perfecttheorem}){\bf .}
A graph is perfect if and only if it has no odd holes and no odd antiholes as induced subgraphs.

\smallskip

On the other hand, Bertschi introduced a hereditary class of perfect graphs in \cite{perfectlycontractile}.
An {\em even pair} in a graph $G$
is a pair of non-adjacent vertices of $G$ such that
the length of all chordless paths between them is even.
Contracting a pair of vertices $\{x, y\}$ in a graph $G$ means removing $x$ and
$y$ and adding a new vertex $z$ with edges to every neighbor of $x$ or $y$.
A graph $G$ is called {\em even-contractile}
if there is a sequence $G_0, \dots, G_k$ of graphs such that $G=G_0$, each $G_i$
 is obtained from $G_{i-1}$ by contracting an even pair of  $G_{i-1}$, and $G_k$
is a clique.
A graph is called {\em perfectly contractile} if
all induced subgraphs of $G$ are even-contractile.
Every perfectly contractile graph is perfect.
Moreover, the following graphs are perfectly contractile (\cite{perfectlycontractile});
\begin{itemize}
	\item Meyniel graphs;
	\item perfectly orderable graphs;
	\item clique separable graphs.
\end{itemize}
On the other hand, a necessary condition for a graph to be perfectly contractile 
is known.
An {\em odd stretcher} (or {\em prism}) graph $G_{s,t,u}$
($1 \le s,t,u \in \ZZ$) is a graph 
on the vertex set \[\{i_1,i_2,\dots,i_{2s}, j_1,j_2,\dots,j_{2t},k_1,k_2,\dots,k_{2u}\}\]
with edges
$$
\{i_1,j_1\},
\{i_1,k_1\},
\{j_1,k_1\},
\{i_{2s},j_{2t}\},
\{i_{2s},k_{2u}\},
\{j_{2t},k_{2u}\},
\ \ 
\{i_1,i_2\},\{i_2,i_3\},\dots,\{i_{2s-1},i_{2s}\},
$$
$$
\{j_1,j_2\},\{j_2,j_3\},\dots,\{j_{2t-1},j_{2t}\},
\{k_1,k_2\},\{k_2,k_3\},\dots,\{k_{2u-1},k_{2u}\}.
$$
If $s=t=u=1$, then the graph $G_{1,1,1}$ coincides with 
the antihole of length 6.
In \cite{planarperfectly}, it was formally shown that if a graph $G$ is perfectly contractile, then $G$ has no odd holes, no (odd/even) antiholes and no odd stretchers.
It is conjectured \cite{pc_conjecture} that this necessary condition is also a sufficient condition for perfectly contractile graphs.
Namely,
\begin{Conjecture}
\label{graphconjecture}
A graph $G$ is perfectly contractile
if and only if $G$ contains no odd holes, no (odd/even) antiholes and no odd stretchers as induced subgraphs.
\end{Conjecture}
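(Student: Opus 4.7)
My plan is to reduce the conjecture to its converse direction, since the forward implication is already established in \cite{planarperfectly}. So it remains to show that membership in the class $\Ac$ of graphs with no odd hole, no antihole, and no odd stretcher as an induced subgraph forces $G$ to be perfectly contractile. Since $\Ac$ is manifestly closed under induced subgraphs, it suffices by induction on $|V(G)|$ to prove two lemmas: first, that every non-complete $G\in\Ac$ admits an even pair $\{x,y\}$; second, that for any such even pair, the contracted graph $G/\{x,y\}$ is again in $\Ac$. Chaining these contractions terminates in a clique, and because every induced subgraph of $G$ also lies in $\Ac$, the same argument delivers perfect contractility rather than mere even-contractility.

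I would first handle the preservation step, which is essentially a mechanical case analysis. Given a forbidden induced subgraph $H$ of $G/\{x,y\}$ (odd hole, antihole, or odd stretcher) containing the new vertex $z$, one lifts $H$ to $G$ by replacing $z$ with $x$, with $y$, or with a chordless $x$--$y$ path through the interior of $H$; the even-pair hypothesis (every chordless $x$--$y$ path has even length) is then used to rule out each case and either recover a forbidden induced subgraph already present in $G$ or derive a parity contradiction. The careful bookkeeping is routine but notationally heavy, and the odd-stretcher case is the most delicate because one must check all three rungs of the prism simultaneously.

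The heart of the matter, and the main obstacle, is the existence step: producing an even pair in every non-complete $G\in\Ac$. Even pairs have been exhibited in each of the subclasses listed in the excerpt (Meyniel, perfectly orderable, clique separable) by ad hoc methods, but a uniform proof is precisely what Conjecture \ref{graphconjecture} demands and what has so far resisted attack. My approach would be to develop a structural decomposition theorem for $\Ac$ in the spirit of the Chudnovsky--Robertson--Seymour--Thomas theorem for Berge graphs: show that every $G\in\Ac$ either decomposes along a ``simple'' cutset (clique cutset, star cutset, homogeneous pair, or $2$-join) along which an even pair can be transported from a smaller piece, or belongs to a short list of basic graphs where an even pair can be produced by hand. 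The role of the no-odd-stretcher hypothesis should be to exclude exactly those perfect graphs whose only even pairs are destroyed under standard reductions, and translating this global exclusion into a usable \emph{local} condition on candidate pairs $\{x,y\}$ and their common neighborhoods is the genuinely new technical step that any proof of the conjecture must overcome.
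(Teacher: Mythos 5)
There is a genuine gap --- indeed, there is no proof here, and the paper itself offers none either: the statement you were asked to prove is Conjecture~\ref{graphconjecture}, the Everett--Reed conjecture, which the paper explicitly records as open (``However, Conjecture~\ref{graphconjecture} is still open''). The paper's contribution is not a proof of this conjecture but a commutative-algebraic analogue (Conjecture~\ref{toricconjecture}) verified on subclasses. Your proposal is an honest research program rather than a proof: you reduce the conjecture to two lemmas, and you concede in your own words that the second one --- every non-complete graph in $\Ac$ admits a suitable even pair, to be extracted from a hypothetical decomposition theorem in the style of Chudnovsky--Robertson--Seymour--Thomas --- is ``precisely what Conjecture~\ref{graphconjecture} demands and what has so far resisted attack.'' A proof whose key lemma is the unproven conjecture itself is not a proof.

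Beyond that, your first lemma is also overclaimed. You describe the preservation step --- that contracting an even pair of a graph in $\Ac$ yields a graph still in $\Ac$ --- as ``essentially a mechanical case analysis'' that is ``routine but notationally heavy.'' It is not. What is classically known (Fonlupt--Uhry, Meyniel) is that contracting an even pair preserves \emph{perfection}; preservation of the stronger properties ``no antihole'' and ``no odd stretcher'' under contraction of an \emph{arbitrary} even pair is not true in general, which is exactly why the literature on this conjecture (e.g., the planar, dart-free, and bull-free cases cited in the paper) must work with carefully chosen even pairs whose contraction stays inside the class. So even granting an existence result for even pairs, your induction would not close without specifying \emph{which} even pair to contract and proving the preservation claim for that choice --- another substantive missing ingredient, not bookkeeping.
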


By the Strong Perfect Graph Theorem, 
 a graph $G$ contains no odd holes, no antiholes and no odd stretchers as induced subgraphs
if and only if $G$ is perfect and contains no even antiholes and no odd stretchers as induced subgraphs.
Conjecture \ref{graphconjecture} is true for 
\begin{itemize}
\item
planar graphs (\cite{planarperfectly}),
\item
dart-free graphs (\cite{dartfree}) (including claw-free graphs),
\item
even stretcher-free graphs (\cite{prismfree}) (including bull-free graphs (\cite{bullfree})).
\end{itemize}
However, Conjecture \ref{graphconjecture} is still open.

We consider this conjecture from a viewpoint of commutative algebra.
In particular, we study the toric ideal of the stable set polytope of a graph.
Let $G$ be a finite simple graph on the vertex set $[n]:=\{1,2,\dots,n\}$ and the edge set $E(G)$.
A subset $S \subset [n]$ is called a {\em stable set} (or {\em independent set}) of $G$
if $\{i,j\} \notin E(G)$ for all $i,j \in S$ with $i \neq j$.
In particular, the empty set $\emptyset$ and any singleton $\{i\}$ with $i \in [n]$
are stable.
Let $S(G)=\{S_1,\ldots,S_m\}$ denote the set of all stable sets of $G$.
Given a subset $S \subset [n]$, we associate the $(0,1)$-vector $\rho(S) = \sum_{j \in S} \eb_j$.
Here $\eb_j$ is the $j$-th unit coordinate vector in $\RR^n$.
For example, $\rho(\emptyset) = {\bf 0} \in \RR^n$.
Then the {\em stable set polytope} $\Qc_G \subset \RR^n$ of a simple graph $G$ is the convex hull of 
$\{ \rho(S_1),\ldots,\rho(S_m)\}$.
Now, we define the toric ideal of the stable set polytope of a graph.
Let $K[\xb]:=K[x_1,\ldots,x_m]$ and $K[\tb,s]:=K[t_1,\ldots,t_n,s]$ be polynomial rings over a field $K$. Then the {\em toric ideal} of the stable set polytope $\Qc_G$ of $G$ is the kernel of a homomorphism $\pi_G : K[\xb] \to K[\tb,s]$ defined by $\pi_G(x_i)=\tb^{\rho (S_i)}s$. Here, for a nonnegative integer vector $\ab=(a_1,\ldots,a_n) \in \ZZ^n$, we denote $\tb^{\ab}=t_1^{a_1} \cdots t_n^{a_n} \in K[\tb,s]$. On the other hand, the image of $\pi_G$ is denoted by $K[G]$ and called the {\em toric ring} of $\Qc_G$.
The toric ring $K[G]$ is called {\em quadratic} if $I_G$ is generated by quadratic binomials.

Let $\Mc_m$ be the set of all monomials in $K[\xb]$.
A total order $<$ on $\Mc_m$ is called a {\em monomial order}
if $<$ satisfies (i) $1 \in \Mc_m$ is the smallest monomial in $\Mc_m$;
(ii) if $u, v, w \in \Mc_m$ satisfies $u<v$, then we have $uw < vw$.
The {\em initial monomial} ${\rm in}_<(f)$ of  a nonzero
polynomial $f \in K[\xb]$ is the largest monomial appearing in $f$.
Given an ideal $I \subset K[\xb]$, 
the {\em initial ideal} ${\rm in}_< (I)$ of $I$ is a monomial ideal generated by $\{{\rm in}_<(f) : 0 \neq f \in I\}$.
The initial ideal is called {\em squarefree} (resp. {\em quadratic}) if it is generated by squarefree (resp. quadratic) monomials.
A finite subset $\{g_1, \dots, g_t\} \subset I$ is called a
{\em Gr\"obner basis} of $I$ with respect to $<$ if ${\rm in}_< (I)$ is generated by
$\{{\rm in}_<(g_1) ,\dots, {\rm in}_<(g_t)\}$.
If $\{g_1, \dots, g_t\}$ is a Gr\"obner basis of $I$,
then $\{g_1, \dots, g_t\}$ generates $I$.
See, e.g., \cite{BinomialIdeals} for details on toric ideals and their Gr\"obner bases in general.

We can characterize when a graph $G$ is perfect in terms of $I_G$.
In fact, a graph $G$ is perfect if and only if the initial ideal of $I_G$ is squarefree with respect to any reverse lexicographic order (\cite{GPT, OHcompressed, Sul}). Similarly, we can characterize when a graph is perfect in terms of the Gorenstein property and the normality of toric rings associated to finite simple graphs (see \cite{HOTperfect,HTperfect,HTorderperfect,OHperfect}).

In the present paper, for a perfect graph $G$, we discuss when the toric ideal $I_G$ is generated by quadratic binomials. 
First, we conjecture the following:
\begin{Conjecture}
\label{toricconjecture}
	Let $G$ be a perfect graph. Then 
		$I_G$ is generated by quadratic binomials if and only if $G$ contains no odd holes, no (odd/even) antiholes and no odd stretchers as induced subgraphs.
\end{Conjecture}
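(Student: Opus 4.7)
My approach would follow the standard two-direction strategy, separating the algebraic obstruction from the structural content.

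For the necessity direction, observe first that since $G$ is perfect, the Strong Perfect Graph Theorem already excludes odd holes and odd antiholes; so the real content is to show that if $G$ contains an even antihole $\overline{C_{2k}}$ with $k \ge 3$ or an odd stretcher $G_{s,t,u}$ as an induced subgraph, then $I_G$ is not generated by quadratic binomials. For each such forbidden $H$, I would exhibit a primitive binomial $f_H \in I_H$ of degree $\ge 3$ that does not lie in the subideal of $I_H$ generated by its quadratic elements. Natural candidates come from the alternating maximum stable sets around the antihole, and from stable sets threading the three paths of the stretcher; small base cases can be verified by direct Gr\"obner basis computation, and the general case then follows by an induction on the parameters $k$ and $(s,t,u)$. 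Since stable sets of an induced subgraph of $G$ are stable sets of $G$, the binomial pulls back to a non-quadratic generator of $I_G$.

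For the sufficiency direction, my plan is to proceed by induction on $|V(G)|$ and produce a Gr\"obner basis of $I_G$ with quadratic initial ideal, with respect to a carefully chosen reverse lexicographic order; this would imply quadratic generation. To construct such an order, I would exploit the perfectly contractile structure of graphs in $\Ac$: contracting an even pair $\{x,y\}$ in $G$ yields a smaller graph $G' \in \Ac$, and I would set up a comparison of toric ideals between $K[G]$ and $K[G']$ that identifies the variables indexed by stable sets agreeing off $\{x,y\}$. An inductive quadratic Gr\"obner basis for $I_{G'}$ would then be lifted back to $I_G$, and an $S$-pair analysis would show that every non-quadratic $S$-pair either descends through the contraction (handled by induction) or involves stable sets touching the pair $\{x,y\}$ in a way controlled by the even-pair definition (parity of chordless $xy$-paths).

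The main obstacle, and the reason the statement remains conjectural, is that the algebraic translation of contracting an even pair is delicate: the correspondence between stable sets of $G$ and $G'$ is many-to-one, and it is not clear a priori that the fiber structure is compatible with a quadratic Gr\"obner basis. For the three classes singled out in the abstract (Meyniel, perfectly orderable, and clique-separable graphs) additional structural tools become available: Meyniel's two-chord condition on long odd cycles gives explicit control over the potentially non-quadratic circuits of $I_G$; perfectly orderable graphs carry a natural linear vertex order that induces a reverse lexicographic Gr\"obner order on $K[\xb]$; and clique-separable graphs decompose along clique cutsets, which translates algebraically into a fiber-product construction of toric rings that preserves quadratic generation. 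In each of these cases the induction closes, but a uniform argument covering all of $\Ac$ would require either a complete structural theorem for perfectly contractile graphs (currently unavailable, being essentially Conjecture \ref{graphconjecture}) or a purely algebraic method bypassing such structure.
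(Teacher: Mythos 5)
The statement you are proving is a \emph{conjecture} in the paper: the authors establish only the ``only if'' direction in full (Proposition~\ref{even_antihole}, quoted from earlier work, for even antiholes, and Theorem~\ref{hitsuyou} for odd stretchers) and the ``if'' direction for three special classes. So there is no complete proof in the paper to match yours against, and your proposal, which candidly leaves the sufficiency direction open, cannot be judged ``correct'' as a proof of the full statement. That said, your necessity sketch is on the right track but underspecified. For odd stretchers the paper's argument is direct and uniform in $(s,t,u)$, not inductive: it exhibits six explicit maximum stable sets $S_1,\dots,S_6$ of $G_{s,t,u}$, each of size $s+t+u-1$, with $\rho(S_1)+\rho(S_2)+\rho(S_3)=\rho(S_4)+\rho(S_5)+\rho(S_6)$, and then shows that for any pair among $S_1,S_2,S_3$ the symmetric difference admits exactly one decomposition into two stable sets, so no quadratic relation can split the cubic binomial. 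Your plan of ``small base cases by Gr\"obner computation plus induction on the parameters'' has a gap: you never say what the inductive step is, and quadratic generation is not obviously preserved under the operations (subdividing a path of the stretcher by two vertices) that increase the parameters; the uniqueness-of-decomposition argument is what actually closes the case for all $s,t,u$ at once.

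On the sufficiency side, your even-pair-contraction strategy is a reasonable heuristic but is not what the paper does even for the classes it handles, and the obstacle you name is real. For Meyniel graphs the paper does not construct a Gr\"obner basis at all; it proves only quadratic generation, by taking an irreducible binomial $f=x_{i_1}\cdots x_{i_r}-x_{j_1}\cdots x_{j_r}$ of degree $r\ge 3$, running Hertz's \textbf{COLOR} algorithm on the induced subgraph on $\bigcup_\ell S_{i_\ell}$ to produce a stable set $S$ meeting all maximal cliques, and then using quadratic exchange relations to force $S_{i_1}=S=S_{j_1}$, contradicting irreducibility. For perfectly orderable graphs the reverse lexicographic order is induced not directly by the vertex order but by a lexicographic order on the monomials $\tb^{\rho(S_i)}$, and the key lemma is that the greedy stable set $S_k\subset S_i\cup S_j$ meets all maximal cliques of the bipartite graph $H_{ij}$, so that $\rho(S_i)+\rho(S_j)-\rho(S_k)$ is again a vertex of the stable set polytope; your sketch omits this step, which is where perfection of $H_{ij}$ (via the facet description of $\Qc_{H_{ij}}$) enters. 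If you want to contribute something beyond the paper, the place to focus is exactly the fiber-structure problem you identify for general even-pair contractions; as stated, your proposal reduces to the same open problem the authors pose.
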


The ``only if'' part of Conjecture \ref{toricconjecture} is true (Proposition~\ref{even_antihole} and Theorem~\ref{hitsuyou}).
On the other hand, it is known that $I_G$ has a squarefree quadratic initial ideal, and in particular, $I_G$ is generated by quadratic binomials if $G$ is either the comparability graph of a poset (\cite{HibiChainTriangulation}), an almost bipartite graph (\cite[Theorem~8.1]{EnNo}), a chordal graph or a ring graph (\cite{EnNo, MOS}), the complement of a chordal bipartite graph (\cite[Corollary~1]{MOS}).

We consider Conjecture \ref{toricconjecture} for some classes of perfectly contractile graphs.
In fact, we will prove that Conjecture \ref{toricconjecture} is true for the following graphs:
\begin{itemize}
	\item Meyniel graphs (Theorem \ref{thm:meyniel});
	\item perfectly orderable graphs (Theorem \ref{thm:perfectorder});
	\item clique separable graphs (Theorem \ref{thm:cliquesep}).
\end{itemize}
In particular, we will show that  if $G$ is either a perfectly orderable graph or a clique separable graph, then $I_G$ possesses a squarefree quadratic initial ideal.
We also remark that Conjecture \ref{toricconjecture} is true for generalized split graphs
(Proposition~\ref{almostall}).
It is known \cite{PrSt} that ``almost all'' perfect graphs are generalized split.

\begin{acknowledgement}
The authors were partially supported by JSPS KAKENHI 18H01134, 19K14505 and 19J00312.
\end{acknowledgement}

\section{A necessary condition for quadratic generation}

In the present section, we will show the ``only if'' part of Conjecture \ref{toricconjecture}.
First, we introduce fundamental tools for studying
a set of generators and Gr\"obner bases of $I_G$.

\begin{Remark}
(a)
Since the stable set polytope $\Qc_G$ of $G$ is a (0,1)-polytope, 
the initial ideal of $I_G$ is squarefree if it is quadratic.
See \cite[Proposition 4.27]{BinomialIdeals}.

\noindent
(b)
It is easy to see that $I_G =\{0\}$ if and only if $G$ is a complete graph.
Even if $I_G =\{0\}$, we say that ``$I_G$ is generated by quadratic binomials''
and ``$I_G$ has a quadratic Gr\"obner basis''.
\end{Remark}

It is known (e.g., \cite{MOS}) that quadratic generation is a hereditary property.

\begin{Proposition}
\label{induced}
Let $G'$ be an induced subgraph of a graph $G$.
Then we have the following{\rm :}
\begin{itemize}
\item[(a)]
If $I_G$ is generated by binomials of degree $\le r$,
then so is $I_{G'}${\rm ;}
\item[(b)]
If $I_G$ has a Gr\"obner basis consisting of binomials of degree $\le r$,
then so does $I_{G'}${\rm ;}
\item[(c)]
If $I_G$ has a squarefree initial ideal,
then so does $I_{G'}$.
\end{itemize}
\end{Proposition}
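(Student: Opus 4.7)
The plan is to realize $I_{G'}$ as the contraction of $I_G$ to a distinguished polynomial subring of $K[\xb]$ and then transfer each property using the binomial/toric structure. Let $V'\subseteq[n]$ be the vertex set of the induced subgraph $G'=G[V']$, and set $W=[n]\setminus V'$. Since $G'$ is induced, a subset $S\subseteq V'$ is stable in $G'$ if and only if it is stable in $G$, so I partition the variables of $K[\xb]$ into two groups: $\yb$ corresponds to those stable sets $S_i$ of $G$ with $S_i\subseteq V'$, and $\zb$ to those with $S_i\cap W\neq\emptyset$. Then $K[\yb]$ is naturally the polynomial ring in which $I_{G'}$ lives, and $\pi_{G'}$ agrees with the restriction of $\pi_G$ to $K[\yb]$.

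The first step is the identification $I_{G'}=I_G\cap K[\yb]$, which rests on the following \emph{key lemma on binomials}: if $\xb^\alpha-\xb^\beta\in I_G$, then either both $\xb^\alpha$ and $\xb^\beta$ lie in $K[\yb]$, or neither does. Indeed, from $\pi_G(\xb^\alpha)=\pi_G(\xb^\beta)$ one obtains $\sum_i\alpha_i\rho(S_i)=\sum_i\beta_i\rho(S_i)$; reading the $j$-th coordinate for any $j\in W$ shows that the appearance of a $z$-variable whose stable set contains $j$ on one side forces the same on the other side. With this in hand, any binomial generating set of $I_G$ restricts, by keeping only those binomials that lie in $K[\yb]$, to a binomial generating set of $I_{G'}$; this proves (a).

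For (b) and (c), let $<$ be a monomial order on $K[\xb]$ for which $I_G$ has a Gr\"obner basis $\Gc$ of the prescribed form, and let $<'$ denote the restriction of $<$ to $K[\yb]$. I would show that $\Gc\cap K[\yb]$ is a Gr\"obner basis of $I_{G'}$ with respect to $<'$. Given $0\neq g\in I_{G'}\subseteq I_G$, the leading monomial ${\rm in}_{<'}(g)={\rm in}_<(g)$ lies in $K[\yb]$ and is divisible by ${\rm in}_<(f)$ for some $f\in\Gc$; since $f$ is a binomial with ${\rm in}_<(f)\in K[\yb]$, the key lemma forces the other monomial of $f$ to lie in $K[\yb]$ as well, so $f\in\Gc\cap K[\yb]$. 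The elements of $\Gc\cap K[\yb]$ inherit the degree bound, giving (b); in case (c), their initial monomials are squarefree, so ${\rm in}_{<'}(I_{G'})$ is squarefree as well.

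The main obstacle---and the reason the argument runs so cleanly for toric ideals---is the key lemma on binomials. Without the alignment of $W$-coordinates enforced by $\pi_G(\xb^\alpha)=\pi_G(\xb^\beta)$, the contraction $I_G\cap K[\yb]$ could a priori contain relations whose lifts in $I_G$ necessarily mix $\zb$-variables into both terms in a balanced way, and no na\"ive restriction of a generating set or Gr\"obner basis of $I_G$ would recover $I_{G'}$. Once this lemma is established, (a), (b) and (c) all follow by routine bookkeeping with the definitions of generation and of a Gr\"obner basis.
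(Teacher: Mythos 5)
Your argument is correct, and it coincides with the standard one: the paper itself gives no proof of Proposition~\ref{induced}, merely citing the literature, and the proof there is exactly your contraction argument --- the subring $K[\yb]$ spanned by the variables indexed by stable sets contained in $V'$ is a combinatorial pure subring of $K[\xb]$, your key lemma on binomials is its defining property, and $I_{G'}=I_G\cap K[\yb]$ together with the restriction of generating sets and Gr\"obner bases gives (a)--(c). Two small steps are worth making explicit: for (a), apply the $K$-algebra retraction $K[\xb]\to K[\yb]$ sending every $\zb$-variable to $0$ to a representation $f=\sum_b h_b b$ of $f\in I_{G'}$; the key lemma guarantees each binomial $b$ is either fixed or annihilated (never turned into a monomial), which is what makes the restricted set generate. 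For (c), one should pass to the reduced Gr\"obner basis, which for a toric ideal consists of binomials whose initial monomials are the minimal (hence squarefree) generators of the initial ideal, so that the key lemma applies.
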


By the following fact (\cite[Example~2]{MOS}), we may assume that $G$ is connected.

\begin{Proposition}
Let $G_1, \dots, G_s$ denote the connected components
of a graph $G$.
Then the toric ring $K[G]$ is the Segre product of $K[G_1],
\dots, K[G_s]$. In particular, 
$I_G$ is generated by quadratic binomials
(resp.~has a squarefree quadratic initial ideal) if and only if 
all of $I_{G_1},\dots, I_{G_s}$ are generated by quadratic binomials
(resp.~have a squarefree quadratic initial ideal).
\end{Proposition}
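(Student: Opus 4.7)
The plan is to first establish the Segre product identification at the level of polytopes and semigroups, and then to deduce the algebraic assertion from it.

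For the structural claim, observe that since no edge of $G$ joins vertices in two distinct components, a subset $S\subseteq V(G)$ is stable in $G$ if and only if each intersection $S\cap V(G_i)$ is stable in $G_i$. This gives a bijection between $S(G)$ and the Cartesian product $S(G_1)\times\cdots\times S(G_s)$, and under the block decomposition $\RR^{V(G)}=\bigoplus_{i}\RR^{V(G_i)}$ the identity $\rho(S)=\sum_{i}\rho(S\cap V(G_i))$ identifies $\Qc_G$ with $\Qc_{G_1}\times\cdots\times\Qc_{G_s}$. Treating the single homogenizing variable $s$ of $K[G]$ as the product $s_1\cdots s_s$ of homogenizers for the individual $K[G_i]$, the generator $\tb^{\rho(S)}s$ factors as $\prod_{i=1}^{s}\tb^{\rho(S\cap V(G_i))}s_i$, which is precisely a degree-one generator of the Segre product $K[G_1]\mathbin{\#}\cdots\mathbin{\#} K[G_s]$. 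This identifies $K[G]$ with that Segre product.

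The ``only if'' direction of both equivalences is immediate from Proposition~\ref{induced}: each $G_i$ is an induced subgraph of $G$, so parts (a) and (c) show that quadratic generation and the existence of a squarefree quadratic initial ideal descend to $G_i$.

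For the ``if'' direction I plan to use the Segre description to exhibit an explicit quadratic generating set of $I_G$. Two families of quadratic binomials sit inside $I_G$: (a) for each $i$, each quadratic binomial $x^{(i)}_{A}x^{(i)}_{B}-x^{(i)}_{C}x^{(i)}_{D}\in I_{G_i}$ lifts to a quadratic binomial $x_{A\cup T^{*}}\,x_{B\cup T^{**}}-x_{C\cup T^{*}}\,x_{D\cup T^{**}}\in I_G$ by attaching arbitrary but fixed stable sets $T^{*},T^{**}$ from the other components to the two slots of each monomial; and (b) the Segre-swap binomials $x_{S\cup T}\,x_{S'\cup T'}-x_{S\cup T'}\,x_{S'\cup T}$, where $S,S'\in S(G_i)$ and $T,T'$ are stable in $V(G)\setminus V(G_i)$. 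Starting from an arbitrary binomial $f=\prod_\alpha x_{U_\alpha}-\prod_\alpha x_{V_\alpha}\in I_G$, one applies Segre swaps in (b) to rearrange the right-hand factors ``slot by slot'' until, for every component $j\neq i$, corresponding slots agree in their $V(G_j)$-pieces; what remains is a discrepancy concentrated in the $i$-th component that is reducible by the lifts in (a), using the quadratic generation of $I_{G_i}$. The main obstacle is the combinatorial bookkeeping needed to carry out this matching procedure and, for the Gröbner basis half of the statement, the verification that a block lexicographic order refining squarefree quadratic monomial orders on each $K[G_i]$ turns the union of the type-(a) and type-(b) binomials into a squarefree quadratic Gröbner basis of $I_G$.
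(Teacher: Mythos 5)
The paper offers no proof of this proposition at all: it is quoted as a known fact with the citation \cite[Example~2]{MOS}, so your self-contained argument is necessarily a different route from what the paper does. Your argument is essentially the standard proof of that cited fact, and its core is sound: the identification $\Qc_G=\Qc_{G_1}\times\cdots\times\Qc_{G_s}$ via the bijection $S(G)\cong S(G_1)\times\cdots\times S(G_s)$ is correct, the Segre-product description of $K[G]$ follows, the ``only if'' direction via Proposition~\ref{induced} is exactly right, and for quadratic generation the reduction of an arbitrary binomial of $I_G$ by Segre swaps plus lifted relations is the standard (and workable) device --- one projects the binomial to each component, expresses the projected binomial in terms of the quadratic generators of $I_{G_i}$, and lifts each elementary move, using swaps to keep the other slots aligned. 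The one place where your proposal is genuinely thin is the Gr\"obner basis half. The phrase ``block lexicographic order refining orders on each $K[G_i]$'' does not quite parse here, because the variables of $K[\xb]$ are indexed by \emph{tuples} of stable sets rather than being partitioned into blocks belonging to the separate factors; what is needed is a term order on the tuple-indexed variables (e.g.\ a sorting/staircase-type order on the product, corresponding to the staircase refinement of the product of the flag unimodular triangulations of the $\Qc_{G_i}$, or an appeal to the known fact that Segre products preserve the existence of quadratic Gr\"obner bases). You flag this verification as outstanding, and it is: as written, the squarefree quadratic initial ideal claim is a plan rather than a proof, whereas the quadratic-generation claim is complete up to routine bookkeeping.
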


A {\em clique} in a graph is a set of pairwise adjacent vertices in the graph.
Let $G$ be a connected graph on the vertex set $[n]$.
A subset $C \subset [n]$ is called a {\em cutset} of $G$
 if the induced subgraph of $G$ on the vertex set $[n] \setminus C$
is not connected.
We say that a graph $G$ is obtained by gluing graphs $H_1$ and $H_2$
along their common clique if $H_1 \cap H_2$ is a complete graph
and $G = H_1 \cup H_2$.

\begin{Proposition}[{\cite{EnNo}}]
\label{clique_sum}
Let $G$ be a connected graph with a clique cutset $C$.
Suppose that $G$ is obtained by gluing two graphs 
$H_1$ and $H_2$ along their common clique on $C$.
Then $I_G$ is generated by quadratic binomials
(resp.~has a squarefree quadratic initial ideal)
if and only if $I_{H_1}$ and $I_{H_2}$ are 
generated by quadratic binomials
(resp.~have a squarefree quadratic initial ideal).
\end{Proposition}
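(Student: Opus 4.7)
The ``only if'' direction is immediate from Proposition~\ref{induced}, since $H_1$ and $H_2$ are induced subgraphs of $G$.

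For the ``if'' direction, the combinatorial fact driving the argument is that, because $C$ is a clique cutset with no edges between $V(H_1)\setminus C$ and $V(H_2)\setminus C$, the stable sets of $G$ are in bijection with ``compatible pairs'': every $S\in S(G)$ decomposes uniquely as $S=S_1\cup S_2$ with $S_i:=S\cap V(H_i)\in S(H_i)$ and $S_1\cap C=S_2\cap C$, and conversely any such compatible pair glues to a stable set of $G$. Since $C$ is a clique, the common intersection $S_1\cap C=S_2\cap C$ is either empty or a singleton. In toric language, this means $K[G]$ is the fiber product of $K[H_1]$ and $K[H_2]$ over the polynomial ring generated by $\pi_G(x_{\emptyset})$ and the $\pi_G(x_{\{c\}})$ with $c\in C$.

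Given quadratic generating sets (resp.\ quadratic Gr\"obner bases) of $I_{H_1}$ and $I_{H_2}$, I would construct the corresponding object for $I_G$ from two families of quadratic binomials in $K[\xb]$. First, each generator of $I_{H_i}$ lifts to a quadratic binomial in $I_G$ by identifying every stable set $T$ of $H_i$ with the compatible pair $(T,\emptyset)$ or $(\emptyset,T)$. Second, I would adjoin the ``swap'' binomials
\[
x_S x_T - x_{S'} x_{T'}, \qquad S'=S_1\cup T_2,\quad T'=T_1\cup S_2,
\]
which lie in $I_G$ precisely when the compatibility condition $S_1\cap C=T_2\cap C$ (equivalently $T_1\cap C=S_2\cap C$) holds, so that $S'$ and $T'$ are again stable in $G$. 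To show these generate $I_G$, I would take an arbitrary binomial $f\in I_G$ and first use swap binomials to align the $C$-contents of the stable sets on each side, reducing $f$ to a binomial that splits as a product of relations living separately in $K[H_1]$ and $K[H_2]$, where the hypothesis applies.

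For the Gr\"obner basis refinement, the natural choice is a block term order on $K[\xb]$ that respects the partition of variables according to $S\cap C$ and according to whether $S$ meets $V(H_1)\setminus C$ or $V(H_2)\setminus C$; with such an order, the leading terms of the swap binomials and of the lifted generators of $I_{H_i}$ should be compatible. The main obstacle will be verifying that all S-pairs reduce to zero modulo this combined family, in particular the S-pairs between two swap binomials and between a swap binomial and a lifted generator. This is where the clique structure of $C$ (i.e., $|S\cap C|\le 1$) is crucial: it prevents cascading compatibility conditions and keeps the reductions quadratic.
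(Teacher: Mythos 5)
The paper does not prove this proposition at all: it is quoted from \cite{EnNo}, so there is no internal proof to compare against. Your plan is, in outline, the argument that does appear in the literature: the gluing along a clique exhibits $K[G]$ as a codimension-zero \emph{toric fiber product} (in the sense of Sullivant) of $K[H_1]$ and $K[H_2]$, graded by $C\cup\{\emptyset\}$ via $S\mapsto S\cap C$, and the clique hypothesis is exactly what makes $|S\cap C|\le 1$ and hence makes the grading vectors linearly independent. The ``only if'' direction via Proposition~\ref{induced} is correct, your description of stable sets of $G$ as compatible pairs is correct, and your swap binomials are precisely the quadrics $\mathrm{Quad}$ of that theory (note the compatibility condition you state is simply $S\cap C=T\cap C$). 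For the \emph{generation} half of the statement your outline is sound: order the factors of the two monomials so that corresponding factors have equal $C$-content, use swaps to match the $H_2$-parts positionwise, and then mimic a quadratic generation sequence for the resulting $I_{H_1}$-relation inside $I_G$ by carrying the $H_2$-parts along (each step stays a quadratic binomial of $I_G$). One small imprecision: a stable set $T$ of $H_1$ with $T\cap C\neq\emptyset$ corresponds to the pair $(T,T\cap C)$, not $(T,\emptyset)$.

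The genuine gap is in the Gr\"obner basis half, which is the part of the proposition the paper actually needs (Corollary~\ref{clique_sum_p.o.} and Theorem~\ref{thm:cliquesep} rest on it). First, the term order you propose is not well defined: a variable of $K[\xb]$ is indexed by a stable set of $G$ that typically meets both $V(H_1)\setminus C$ and $V(H_2)\setminus C$, so ``partition the variables according to which side $S$ meets'' does not partition them, and a block order by $S\cap C$ alone will not reproduce the leading terms of the factor Gr\"obner bases. Second, you defer exactly the step that carries the content --- showing the S-pairs (or, equivalently, that every standard monomial pair is connected by leading-term reductions) work out. The standard repair is Sullivant's lifting construction: choose weight vectors $\omega_i$ realizing the given quadratic initial ideals of $I_{H_i}$, pull them back to $K[\xb]$ through the two projections $x_{(S_1,S_2)}\mapsto x_{S_1}$ and $x_{(S_1,S_2)}\mapsto x_{S_2}$, add a weight making the ``sorted'' matchings minimal so that the swap binomials have the intended initial terms, and take the Gr\"obner basis to be $\mathrm{Quad}$ together with \emph{all} compatible lifts $x_{(T,B)}x_{(T',B')}-x_{(U,B)}x_{(U',B')}$ of the factor generators (your single canonical lift is not enough to generate the initial ideal). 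Squarefreeness then comes for free from the paper's Remark (a), since $\Qc_G$ is a $(0,1)$-polytope. As written, your proposal establishes the quadratic generation statement but not the initial ideal statement.
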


The following proposition says that $I_G$ rarely has a quadratic  Gr\"obner basis
if $G$ is not perfect.

\begin{Proposition}[{\cite[Proposition~1.3]{matsuda2018}}]
\label{matsuda_antihole}
Suppose that $H$ is an odd antihole with $\ge 7$ vertices.
Then $I_H$ is generated by quadratic binomials and has
no quadratic  Gr\"obner basis.
In particular, if a graph $G$ has an odd  antihole with $\ge 7$ vertices,
then $I_G$ has no quadratic  Gr\"obner basis.
\end{Proposition}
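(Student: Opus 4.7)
Write $n = 2k+1 \geq 7$ and label the vertices of $H = \overline{C_n}$ cyclically by $1, 2, \ldots, n$. Since stable sets of $H$ coincide with cliques of the cycle $C_n$, the stable sets are $\emptyset$, the singletons $\{i\}$, and the cycle-edges $\{i, i+1\}$ (indices modulo $n$). I would assign variables $u, y_1, \ldots, y_n, z_1, \ldots, z_n$ to these stable sets, so that $I_H$ sits inside $K[u, y_1, \ldots, y_n, z_1, \ldots, z_n]$ in an essentially canonical way.

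\textbf{Part 1 (quadratic generation).} First I would exhibit an explicit collection $\Bc$ of quadratic binomials in $I_H$, namely all binomials of the forms
\[
y_i y_{i+1} - u z_i, \qquad y_i z_j - y_{j'} z_{i'}, \qquad z_i z_j - z_{i'} z_{j'}
\]
whose two monomials have identical image in $K[\tb, s]$. Then I would argue that any binomial $w - w' \in I_H$ reduces to zero modulo $\Bc$ by a normal-form procedure: the moves of $\Bc$ let us swap an ``edge variable'' $z_i$ against the adjacent pair $y_i y_{i+1}$ at the cost of one factor of $u$, and let us perform adjacent transpositions along $C_n$ within any monomial having a fixed multiset-sum in $\ZZ^n$. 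Since adjacent transpositions on the cycle generate every rearrangement of a given multiset of vertices, the rewriting terminates and reaches every representation; hence $I_H = (\Bc)$.

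\textbf{Part 2 (no quadratic Gr\"obner basis).} I would argue by contradiction: suppose some monomial order $<$ yields a quadratic Gr\"obner basis of $I_H$. By Remark~2.1(a), $\mathrm{in}_<(I_H)$ is then a squarefree quadratic monomial ideal, hence the Stanley--Reisner ideal of a flag regular unimodular triangulation $\Delta$ of the $(0,1)$-polytope $\Qc_H$. The two cyclic maximum matchings of $C_n$, shifted by one vertex, produce the degree-$(k+1)$ toric identity
\[
z_1 z_3 \cdots z_{n-2}\, y_n \;=\; z_2 z_4 \cdots z_{n-1}\, y_1
\]
in $K[H]$; any chain of $\Bc$-rewrites connecting its two sides must rotate around the cycle $C_n$. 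The orientations of these cyclic rewrites would have to be consistent with a single linear order on $z_1, \ldots, z_n$, but the odd length of $C_n$ forces a parity clash. Making this parity obstruction precise---by analysing which pairs $\{y_i, z_j\}$ and $\{z_i, z_j\}$ can be minimal non-faces of $\Delta$, and using the cyclic symmetry of $H$ to rotate any hypothetical order into a contradiction with itself---is the heart of the matter.

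The final ``in particular'' assertion then drops out of Proposition~\ref{induced}(b): if $I_G$ admitted a quadratic Gr\"obner basis and $H$ were an induced subgraph of $G$, then so would $I_H$, contradicting the previous paragraph. The main obstacle is clearly Part~2; Part~1 is essentially a bookkeeping argument exploiting the cycle structure, whereas excluding \emph{every} monomial order requires the geometric translation into triangulations and a delicate analysis of how the flag condition interacts with the cyclic symmetry of odd antiholes.
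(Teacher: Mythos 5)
The paper does not prove this proposition at all; it is imported verbatim from \cite[Proposition~1.3]{matsuda2018}, so there is no internal argument to compare against. Judged on its own terms, your proposal is an outline rather than a proof, and the decisive step is missing. In Part~2 you reduce the claim to showing that no monomial order yields a flag (squarefree quadratic) initial ideal, correctly invoke the $(0,1)$-polytope fact to get squarefreeness, and exhibit the genuine degree-$(k+1)$ relation $z_1z_3\cdots z_{n-2}y_n - z_2z_4\cdots z_{n-1}y_1$ coming from the two maximum matchings of $C_n$. But you then write that making the ``parity obstruction'' precise ``is the heart of the matter'' --- which is exactly the part that has to be done. As stated, the obstruction is not even clearly formulated: you would need to identify, for an arbitrary monomial order, which quadratic binomials can have initial monomial dividing each side of the long relation (note that for $n\ge 7$ most products $z_iz_j$ and $z_iy_k$ admit no alternative decomposition into stable sets at all, so only a few ``rotation'' binomials $y_iz_{i+1}-z_iy_{i+2}$ are available), and then derive a cyclic chain of strict inequalities of odd length that closes on itself. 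Without that analysis the non-existence claim is unproved.

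Part~1 is also only sketched: asserting that ``adjacent transpositions on the cycle generate every rearrangement'' does not by itself show that any two monomials of $K[u,y_1,\dots,y_n,z_1,\dots,z_n]$ with the same image under $\pi_H$ are connected by quadratic moves; one must handle monomials mixing $u$, several $y_i$'s and several $z_j$'s, and verify termination of the rewriting (your set $\Bc$ is not a priori a Gr\"obner basis, and indeed cannot be, by Part~2, so a naive normal-form argument needs care). The correct identification of stable sets of $\overline{C_n}$ with cliques of $C_n$ and the final reduction of the ``in particular'' clause to Proposition~\ref{induced}(b) are fine, but the two substantive assertions of the proposition remain unestablished.
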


The following necessary condition is known.

\begin{Proposition}[{\cite[Proposition~11]{MOS}}]
\label{even_antihole}
If $I_G$ is generated by quadratic binomials,
then $G$ has no even antiholes.
\end{Proposition}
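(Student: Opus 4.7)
By the hereditary property recorded in Proposition~\ref{induced}(a), it suffices to prove the contrapositive in the case where $G$ is itself an even antihole, i.e., $G = \overline{C_{2n}}$ with $n \ge 3$. Label the vertices by $[2n]$ so that the underlying cycle $C_{2n}$ has edges $e_i := \{i,i+1\}$ (indices mod $2n$). Since $C_{2n}$ is triangle-free, every stable set of $G$ has size at most two; consequently the stable sets of $G$ are exactly $\emptyset$, the $2n$ singletons, and the $2n$ edges $e_1,\ldots,e_{2n}$.

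The witness binomial comes from the two proper 2-colourings of $E(C_{2n})$: both $\{e_1,e_3,\ldots,e_{2n-1}\}$ and $\{e_2,e_4,\ldots,e_{2n}\}$ partition $[2n]$, so
\[
f \,:=\, x_{e_1}x_{e_3}\cdots x_{e_{2n-1}} - x_{e_2}x_{e_4}\cdots x_{e_{2n}} \,\in\, I_G
\]
is a binomial of degree $n \ge 3$. Let $J$ denote the ideal generated by all quadratic binomials of $I_G$; the plan is to show $f \notin J$, which forces some minimal generator of $I_G$ to have degree $\ge 3$.

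The main step is a rigidity claim: for any two distinct odd indices $i,j \in [2n]$, the only pair of stable sets $\{S,T\}$ with $\rho(S)+\rho(T)=\rho(e_i)+\rho(e_j)$ is $\{S,T\}=\{e_i,e_j\}$. Granting this, no quadratic binomial of $I_G$ has the form $x_{e_i}x_{e_j} - x_{S'}x_{T'}$ with $\{S',T'\} \ne \{e_i,e_j\}$, so no ``quadratic move'' applies to $m_0 := x_{e_1}x_{e_3}\cdots x_{e_{2n-1}}$. Therefore $m_0$ alone constitutes its $J$-equivalence class, so in particular $m_0 \not\equiv x_{e_2}x_{e_4}\cdots x_{e_{2n}} \pmod{J}$, giving $f \notin J$.

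For the rigidity claim, $\rho(e_i)+\rho(e_j)$ is the indicator of the four-element set $\{i,i+1,j,j+1\}$; since $G$ has no stable set of size $\ge 3$, every alternate decomposition must split this set into two edges of $C_{2n}$. Besides $\{e_i,e_j\}$, the only candidates are $\{\{i,j\},\{i+1,j+1\}\}$ and $\{\{i,j+1\},\{i+1,j\}\}$. The first requires $j \equiv i \pm 1 \pmod{2n}$, which is impossible because $i$ and $j$ are both odd. The second requires simultaneously $j \equiv i-2$ and $j \equiv i+2 \pmod{2n}$, forcing $2n \mid 4$, which fails for $n \ge 3$. This short parity-and-distance check on the cycle is the only real obstacle, and the conclusion is immediate once it is in place.
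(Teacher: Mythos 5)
Your proof is correct. Note that the paper does not prove this proposition itself --- it is quoted from \cite{MOS} --- so the only internal point of comparison is the proof of Theorem~\ref{hitsuyou}, and your argument follows exactly that template: exhibit a binomial of degree $\ge 3$ (here the degree-$n$ relation coming from the two perfect matchings of $C_{2n}$, using that stable sets of $\overline{C_{2n}}$ are cliques of $C_{2n}$ and hence have size at most $2$), then show by a uniqueness-of-decomposition argument that no nonzero quadratic binomial of $I_G$ has a monomial dividing one side, so that side is isolated under quadratic moves and the binomial cannot lie in the ideal generated by quadrics. Your parity check ruling out the two alternative matchings of $\{i,i+1,j,j+1\}$ is the correct rigidity statement and is verified without error; the only ingredient left implicit is the standard fact that a binomial $u-v$ of a toric ideal belongs to the subideal generated by a set of binomials if and only if $u$ and $v$ are connected by the corresponding moves, which the paper likewise uses without comment in the proof of Theorem~\ref{hitsuyou}.
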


We now give a new necessary condition.

\begin{Theorem}
\label{hitsuyou}
If $I_G$ is generated by quadratic binomials,
then $G$ has no odd stretchers as induced subgraphs.
\end{Theorem}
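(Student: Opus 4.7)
The plan is to apply Proposition~\ref{induced} to reduce the problem to showing that $I_{G_{s,t,u}}$ is not generated by quadratic binomials for every odd stretcher $G_{s,t,u}$. The base case $(s,t,u)=(1,1,1)$ is immediate: $G_{1,1,1}$ is isomorphic to the even antihole $\overline{C_6}$, so Proposition~\ref{even_antihole} already supplies the conclusion.

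For $(s,t,u)\ne(1,1,1)$, by the three-fold symmetry of the construction we may assume $u\ge 2$. The strategy is to exhibit an explicit degree-$3$ binomial $f\in I_{G_{s,t,u}}$ that does not lie in the subideal $J$ generated by the quadratic binomials. A natural candidate is built from the odd/even decompositions of the three paths: setting $I_{\mathrm{odd}}=\{i_{2\ell-1}:1\le\ell\le s\}$, $I_{\mathrm{even}}=\{i_{2\ell}:1\le\ell\le s\}$ and similarly $J_{\mathrm{odd}},J_{\mathrm{even}},K_{\mathrm{odd}},K_{\mathrm{even}}$, I take
\[
A=I_{\mathrm{odd}}\cup J_{\mathrm{even}}\cup\{k_2\},\ \ B=J_{\mathrm{odd}}\cup K_{\mathrm{even}},\ \ C=K_{\mathrm{odd}}\cup I_{\mathrm{even}},
\]
and define $A',B',C'$ by swapping the roles of $I$ and $J$ in the formulas above. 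All six subsets are stable (the only neighbors of $k_2$, namely $k_1$ and $k_3$, lie in $K_{\mathrm{odd}}\subseteq C\cap C'$ and are absent from $A,B,A',B'$), and
\[
\rho(A)+\rho(B)+\rho(C)=\rho(A')+\rho(B')+\rho(C')=\sum_{v\in V(G)}\eb_v+\eb_{k_2},
\]
so $f:=x_Ax_Bx_C-x_{A'}x_{B'}x_{C'}\in I_{G_{s,t,u}}$.

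The core of the argument is to show $f\notin J$, equivalently, that the multisets $\{A,B,C\}$ and $\{A',B',C'\}$ lie in different connected components of the exchange graph on the fiber $\sum_{v\in V(G)}\eb_v+\eb_{k_2}$, whose edges are the quadratic pair-exchanges. A pair-exchange on two members $U_i,U_j$ of a multiset amounts to choosing a new $2$-coloring of the bipartite subgraph induced by $U_i\cup U_j$ (with the doubled vertex $k_2$ forced into both parts when it appears in both of $U_i,U_j$), so the available exchanges from any given multiset are controlled by the connected-component structure of these bipartite subgraphs, which can be read off explicitly from $s,t,u$. The main obstacle is to construct a combinatorial invariant of multisets in this fiber that is preserved by every such exchange yet separates $\{A,B,C\}$ from $\{A',B',C'\}$; natural candidates include a sign/orientation invariant extracted from how the three parts meet the two triangles, or a winding-type invariant along the even induced cycle $i_1-i_2-\cdots-i_{2s}-j_{2t}-\cdots-j_1-i_1$. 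Ensuring that the invariant is preserved under every pair-exchange---particularly for large $(s,t,u)$, where the bipartite subgraphs can have many components and thus admit many non-trivial exchanges---is the technical heart of the argument, and may require a case-dependent refinement of the six stable sets (for instance, doubling a different path vertex, or several vertices simultaneously) in order to force the bipartite subgraphs into sufficiently rigid configurations.
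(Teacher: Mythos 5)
Your reduction via Proposition~\ref{induced} and the idea of exhibiting an explicit cubic binomial that cannot be generated by quadratics is exactly the paper's strategy, and your six sets are indeed stable with matching degree sums (given $u\ge 2$). But the proof has a genuine gap at its acknowledged ``technical heart'': you never actually show that $\{A,B,C\}$ and $\{A',B',C'\}$ lie in different components of the exchange graph. You only describe what such an argument would need (an exchange-invariant) and concede it ``may require a case-dependent refinement'' of the construction. This is precisely the nontrivial content of the theorem, and it is not optional here: for $s\ge 2$, say, the vertex $i_3$ is isolated in the bipartite graph on $(A\setminus B)\cup(B\setminus A)$ (its only neighbors $i_2,i_4$ lie in $I_{\mathrm{even}}$, which meets neither $A$ nor $B$), so $\{A\setminus\{i_3\},\,B\cup\{i_3\}\}$ is a legitimate quadratic exchange and the exchange graph on your fiber is genuinely nontrivial. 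Without the invariant, nothing rules out a path of such moves connecting the two triples.

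The paper avoids this difficulty by a different choice of the six stable sets: it takes all of them of \emph{maximum} cardinality $s+t+u-1$ (each omits exactly one endpoint of one path, rather than doubling an interior vertex as your $k_2$ does). Then if $x_ix_j-x_kx_\ell$ is a quadratic relation applicable to $x_1x_2x_3$, the identity $|S_k|+|S_\ell|=2(s+t+u-1)$ forces $S_k,S_\ell$ to be maximum stable sets with prescribed intersection and union, and the symmetric difference turns out to admit a \emph{unique} decomposition into two stable sets, so the only applicable quadratic is the trivial one. Your sets $A,B,C$ have cardinalities $s+t+1$, $t+u$, $s+u$, which are not all equal and not maximum, so no such rigidity is available and the fiber analysis you would need is substantially harder. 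Either adopt maximum stable sets (as the paper does) or supply the invariant; as written, the argument is incomplete. (Your handling of $G_{1,1,1}$ via Proposition~\ref{even_antihole} is fine but unnecessary once the general construction works for all $s,t,u\ge 1$.)
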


\begin{proof}
By Proposition~\ref{induced}, 
it is enough to show that the toric ideal $I_{G_{s,t,u}}$ of an odd stretcher $G_{s,t,u}$ is not generated by quadratic binomials.
It is easy to see that the stability number $\max\{ |S| : S \in S(G_{s,t,u})  \}$ of $G_{s,t,u}$ is $s+t+u-1$.
Let $S(G_{s,t,u}) = \{S_1,\ldots,S_m\}$, where 
\begin{eqnarray*}
S_1 &=& \{i_1, i_3, \dots, i_{2s-1}, j_2, j_4, \dots, j_{2t}, k_2, k_4, \dots, k_{2u -2}\},\\
S_2 &=& \{i_2, i_4, \dots, i_{2s-2}, j_1, j_3, \dots, j_{2t-1}, k_2, k_4, \dots, k_{2u }\},\\
S_3 &=& \{i_2, i_4, \dots, i_{2s}, j_2, j_4, \dots, j_{2t-2}, k_1, k_3, \dots, k_{2u -1}\},\\
S_4 &=& \{i_1, i_3, \dots, i_{2s-1}, j_2, j_4, \dots, j_{2t-2}, k_2, k_4, \dots, k_{2u}\},\\
S_5 &=& \{i_2, i_4, \dots, i_{2s}, j_1, j_3, \dots, j_{2t-1}, k_2, k_4, \dots, k_{2u -2}\},\\
S_6 &=& \{i_2, i_4, \dots, i_{2s-2}, j_2, j_4, \dots, j_{2t}, k_1, k_3, \dots, k_{2u -1}\}.
\end{eqnarray*}
Note that $|S_i| = s+t+u-1$ for $i=1,2,\dots,6$.
Since $\rho(S_1) + \rho(S_2) + \rho(S_3) = \rho(S_4) + \rho(S_5) + \rho(S_6)$,
the binomial $f=x_1 x_2 x_3 - x_4 x_5 x_6$ belongs to $I_{G_{s,t,u}}$.
Suppose that $I_{G_{s,t,u}}$ is generated by quadratic binomials.
Then there exist $1 \le i < j \le 3$ and stable sets
$S_k, S_\ell$ of $G_{s,t,u}$ with $|S_k| =|S_\ell| = s+t+u-1$
such that $x_i x_j - x_k x_\ell$ is a nonzero binomial in $I_{G_{s,t,u}}$.
By the symmetries on $S_1, S_2, S_3$, we may assume that $i=1$ and $j=2$.
Since $x_i x_j - x_k x_\ell$ belongs to $I_{G_{s,t,u}}$,
we have
$$S_1 \cap S_2 = S_k \cap S_\ell = \{k_2, k_4, \dots, k_{2u -2}\},$$
$$(S_k \cup S_\ell) \setminus (S_k \cap S_\ell ) = 
\{i_1, i_2, \dots, i_{2s-1}, j_1, j_2, \dots, j_{2t}, k_{2u}\}.
$$
Since there is exactly one way
$$
\{i_1, i_3, \dots, i_{2s-1}, j_2, j_4, \dots, j_{2t}\}
\cup
\{i_2, i_4, \dots, i_{2s-2}, j_1, j_3, \dots, j_{2t-1}, k_{2u}\}
$$
to decompose $(S_k \cup S_\ell) \setminus (S_k \cap S_\ell ) $
into two stable sets,
it follows that $\{S_1,S_2\} = \{S_k, S_\ell\}$,
a contradiction.
\end{proof}

By Proposition \ref{even_antihole} and Theorem \ref{hitsuyou},
the ``only if'' part of Conjecture \ref{toricconjecture} is true.

\section{Quadratic generation for Meyniel graphs}

A graph is called {\em Meyniel} if any odd cycle of length $\ge 5$ has at least two chords.
In the present section, we will prove that $I_G$ is generated by quadratic binomials
if $G$ is Meyniel.
Meyniel graphs are one of the important classes of perfect graphs.
Several characterization of Meyniel graphs are known.
For example, it is known \cite{Hoang} that
a graph $G$ is Meyniel if and only if $G$ is {\em very strongly perfect},
i.e., for every induced subgraph $H$ of $G$, 
every vertex of $H$ belongs to a stable set of $H$ meeting all maximal cliques of $H$. 
Hertz \cite{MeynielColor} introduced the following algorithm:

\bigskip

\noindent
\underline{{\bf COLOR} \textit{with rule ${\mathcal R}$}}:

{\bf Input}: a Meyniel graph $G$

{\bf Output}: a coloring of the vertices of $G$

1. $G_0 := G$; $k:=0$; $(vw)_0$ is any vertex of $G$;

2. While $G_k$ is not a clique do:

\ \ \ \ \ \ 2.1. Choose two non-adjacent vertices $v_k$ and $w_k$
by the following rule (``rule ${\mathcal R}$'');
\begin{itemize}
\item
If there exists at least one vertex not adjacent to $(vw)_k$,
then $v_k:=(vw)_k$,
else choose for $v_k$ any vertex not adjacent to every other vertex in $G_k$;
\item
Choose for $w_k$ any vertex not adjacent to $v_k$ such that
the number of common neighbors with $v_k$ is maximal
among the vertices not adjacent to $v_k$. 
\end{itemize}

\ \ \ \ \ \ 2.2. Construct $G_{k+1}$ by contracting $v_k$ and $w_k$
into a vertex $(vw)_{k+1}$;

\ \ \ \ \ \ 2.3. $k:=k+1$;

3. Color the clique $G_k$;

4. While $k\neq 0$ do:

\ \ \ \ \ \ 4.1. $k:=k-1$;

\ \ \ \ \ \ 4.2. Decontract $G_{k+1}$ by giving to $v_k$ and $w_k$ the same color
as $(vw)_{k+1}$.

\bigskip

Then the following is a part of the results given by Hertz \cite{MeynielColor}.

\begin{Proposition}
\label{meyniel_even_pairs}
If we input a Meyniel graph $G$ to the algorithm {\bf COLOR} with rule ${\mathcal R}$,
then we have the following{\rm :}
\begin{itemize}
\item[(a)]
Each $\{v_r, w_r\}$ is an even pair in $G_r${\rm ;}
\item[(b)]
Let $s$ be the smallest index such that $v_s = (vw)_s$ and 
either $G_{s+1}$ is a clique or else $v_{s+1} \neq (vw)_{s+1}$.
Then the set $S=\{v_0,w_0,w_1,\dots,w_s\}$ is a stable set of $G$ meeting all maximal cliques of $G$. 
\end{itemize}
\end{Proposition}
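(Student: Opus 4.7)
The plan is to prove (a) and (b) separately: part (a), that rule $\mathcal{R}$ always selects an even pair, is the deeper statement, while part (b) is a bookkeeping argument once (a) and an algorithmic invariant are in hand.

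For part (a) I would induct on $r$, using two ingredients. The first is that contracting an even pair in a Meyniel graph yields a Meyniel graph, so every $G_r$ produced by the algorithm is Meyniel and the inductive hypothesis applies. The second is the \emph{key lemma}: in a Meyniel graph $H$, if $w$ is a non-neighbor of $v$ maximizing $|N_H(v) \cap N_H(w)|$ among non-neighbors of $v$, then $\{v, w\}$ is an even pair. I would prove this lemma by contradiction, assuming a chordless odd path $P \colon v, x_1, \ldots, x_{2\ell}, w$ in $H$. One classifies the common neighbors $y$ of $v$ and $w$ according to which vertices of $P$ they are adjacent to, and closes up sub-paths of $P$ via such $y$ to produce odd cycles of length $\ge 5$, to which the Meyniel hypothesis on chords applies. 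A case analysis on the parities of attachments along $P$ then exhibits a non-neighbor $w'$ of $v$ --- typically an internal vertex of $P$ or a neighbor of one --- with $N_H(v) \cap N_H(w) \subsetneq N_H(v) \cap N_H(w')$, contradicting the choice of $w$.

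For part (b) I would first check, using rule $\mathcal{R}$ together with the minimality of $s$, that $v_k = (vw)_k$ for every $k = 0, 1, \ldots, s$: a backward induction shows that if $v_k = (vw)_k$ for some $k < s$, failure of the termination clause at $k$ forces $v_{k+1} = (vw)_{k+1}$. Consequently $(vw)_{s+1}$ is the image of successively contracting $v_0, w_0, w_1, \ldots, w_s$, so $S$ is exactly the preimage of $(vw)_{s+1}$ in $G$. Stability follows because at each step $w_k$ is non-adjacent to $v_k = (vw)_k$ in $G_k$, hence non-adjacent in $G$ to every vertex already contracted into $(vw)_k$, namely to $v_0, w_0, \ldots, w_{k-1}$. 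For the max-clique meeting property I would prove a lifting claim: if $\{v, w\}$ is an even pair of a graph $H$ contracted to $z$ in $H' = H/\{v, w\}$ and $T$ is a stable set of $H'$ meeting every maximal clique of $H'$ with $z \in T$, then $\widetilde T := (T \setminus \{z\}) \cup \{v, w\}$ is a stable set of $H$ meeting every maximal clique of $H$. The only nontrivial case is a maximal clique $K$ of $H$ with $K \cap \{v, w\} = \emptyset$: if $K$ is also maximal in $H'$ then $K \cap \widetilde T \neq \emptyset$ follows from $K \cap T \neq \emptyset$ and $z \notin K$; otherwise $z$ is adjacent to all of $K$ in $H'$ (since $K$ is maximal in $H$), yielding $x_1 \in K \cap (N_H(v) \setminus N_H(w))$ and $x_2 \in K \cap (N_H(w) \setminus N_H(v))$ whose presence forces the chordless path $v, x_1, x_2, w$ of odd length $3$, contradicting the even-pair property. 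Starting from $T_{s+1} = \{(vw)_{s+1}\}$, which meets every maximal clique of $G_{s+1}$ because the termination clause makes $(vw)_{s+1}$ universal in $G_{s+1}$, and lifting through each contraction recovers precisely $S$.

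The main obstacle is the key lemma in (a): the Meyniel hypothesis on chords must be used decisively to rule out odd chordless $v$--$w$ paths in the greedy step, and the case analysis on positions and parities of attachments along $P$ is the technical core of Hertz's original argument.
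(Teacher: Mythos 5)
The paper does not prove this proposition at all: it is quoted verbatim as ``a part of the results given by Hertz \cite{MeynielColor}'' and used as a black box in the proof of Theorem~\ref{thm:meyniel}. So there is no in-paper argument to compare against; what you have written is a reconstruction of Hertz's original proof, and its architecture is indeed the right one (the two ingredients you isolate for (a) --- that contracting an even pair of a Meyniel graph yields a Meyniel graph, and that a non-neighbour $w$ of $v$ maximizing $|N(v)\cap N(w)|$ forms an even pair with $v$ --- are exactly the two lemmas Hertz proves). Your part (b) is essentially complete and correct: the stability argument, the lifting lemma for stable sets meeting all maximal cliques through an even-pair contraction (with the $P_4$-contradiction in the case where $z$ dominates a clique missing $v$ and $w$), and the base case at $G_{s+1}$ all check out. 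One small inaccuracy: rule $\mathcal{R}$ does not force $v_0=(vw)_0$ (the seed vertex may be adjacent to everything, in which case $v_0$ is chosen elsewhere), but this does not affect your argument, since $(vw)_{k+1}$ is the image of $\{v_0,w_0,\dots,w_k\}$ and $v_k=(vw)_k$ for $1\le k\le s$ in any case.

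The genuine gap is in part (a). Both of its ingredients are nontrivial theorems, and you prove neither: the closure of Meyniel graphs under even-pair contraction is asserted outright, and the key maximality lemma is reduced to ``a case analysis on the parities of attachments along $P$ \ldots exhibits a non-neighbour $w'$ with strictly larger common neighbourhood,'' which is precisely the several-page combinatorial core of Hertz's paper. As a description of where the difficulty lies and how to attack it, your sketch is accurate; as a proof, part (a) is not yet there. Since the paper itself only cites this result, the honest options are either to cite Hertz as the paper does, or to carry out the case analysis in full --- the intermediate state of your write-up does neither.
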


Using Proposition~\ref{meyniel_even_pairs}, we have the following.

\begin{Theorem}
\label{thm:meyniel}
Let $G$ be a Meyniel graph.
Then $I_G$ is generated by quadratic binomials.
\end{Theorem}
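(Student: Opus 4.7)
The plan is to proceed by induction on $|V(G)|$. Using Proposition~\ref{induced}(a) and Proposition~\ref{clique_sum}, we may reduce to the case where $G$ is a connected Meyniel graph with no clique cutset; the base case of a clique is trivial since then $I_G = (0)$.

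The core input is Proposition~\ref{meyniel_even_pairs}(b), which supplies a stable set $S = \{v_0, w_0, w_1, \ldots, w_s\}$ meeting every maximal clique of $G$. Since the class of Meyniel graphs is hereditary, the induced subgraph $G \setminus S$ is again Meyniel, so by the inductive hypothesis $I_{G \setminus S}$ is generated by quadratic binomials. My strategy is to use $S$ as a ``reference'' stable set and rewrite any binomial of $I_G$, modulo quadratic relations, so that its factors split cleanly between stable sets that intersect $S$ in a canonical way and stable sets contained in $G \setminus S$; then the inductive hypothesis handles the remainder.

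The main substep is an exchange lemma of the following flavor: for any two stable sets $A, B$ of $G$ with $A \cap S \neq B \cap S$, there exist stable sets $A', B'$ of $G$ satisfying $\rho(A) + \rho(B) = \rho(A') + \rho(B')$ in which the vertices of $S$ are redistributed toward a canonical pattern (for instance, one maximizing $|A' \cap S|$). Applying such exchanges across consecutive pairs in a binomial $f = x_{A_1} \cdots x_{A_r} - x_{B_1} \cdots x_{B_r} \in I_G$ of degree $r \ge 3$ normalizes the $S$-component of each $A_i$ and each $B_j$. Once the $S$-content agrees slot-by-slot, the leftover relation lives entirely in $I_{G \setminus S}$.

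The main obstacle is producing the exchange lemma itself. The natural tool is Proposition~\ref{meyniel_even_pairs}(a): each contracted pair $\{v_k, w_k\}$ is an even pair in $G_k$, and the defining property of an even pair---that every chordless path between its endpoints has even length---produces pairs of stable sets whose symmetric differences are even paths admitting exactly two bipartitions, encoding precisely the quadratic binomials of $I_G$ needed. Making this rigorous seems to require a downward induction along the contraction sequence $G = G_0, G_1, \ldots, G_k$ of the COLOR algorithm with rule $\mathcal{R}$, tracking how stable sets of $G$ descend to each $G_r$ and verifying that lifts of quadratic moves in $G_r$ remain quadratic moves in $G$. This bookkeeping, and the verification that the strong stable set from Proposition~\ref{meyniel_even_pairs}(b) is compatible with the exchanges coming from Proposition~\ref{meyniel_even_pairs}(a), is where I expect the principal technical effort to lie.
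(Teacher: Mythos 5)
Your high-level ingredients are the right ones --- the strong stable set from Proposition~\ref{meyniel_even_pairs}(b) and an exchange argument powered by the even-pair property of Proposition~\ref{meyniel_even_pairs}(a) are exactly what the paper uses --- but the proposal stops short of a proof at precisely the point where the work is. The exchange lemma is the entire content of the theorem, and you explicitly defer it. Moreover, your sketch of it is not quite right: the symmetric difference of two stable sets is in general a disjoint union of bipartite connected components, not a path, and the parity statement one actually needs concerns chordless paths between the specific vertices $v_u$ and $w_u$ in the \emph{contracted} graph $G_u$. Concretely, the paper's version is: take $u$ minimal with $w_u\notin S_{i_1}$, say $w_u\in S_{i_2}$, and swap the connected component $H_\alpha$ of the bipartite graph on $(S_{i_1}\setminus S_{i_2})\cup(S_{i_2}\setminus S_{i_1})$ containing $w_u$; the even-pair property of $\{v_u,w_u\}$ in $G_u$ is what guarantees that none of $v_0,w_0,\dots,w_{u-1}$ lies in $H_\alpha$ (otherwise contracting them into $v_u$ yields an odd chordless path from $v_u$ to $w_u$ in $G_u$), so the swap produces stable sets $S_{i_1'}\supseteq\{v_0,w_0,\dots,w_u\}$ and $S_{i_2'}$ with $x_{i_1}x_{i_2}-x_{i_1'}x_{i_2'}\in I_G$. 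Without this argument the ``canonical redistribution'' is unsubstantiated.

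Second, your global structure has a gap that the paper avoids. You induct on $|V(G)|$ and claim that once the $S$-content of the factors agrees slot-by-slot, the leftover relation ``lives entirely in $I_{G\setminus S}$.'' But a quadratic generator of $I_{G\setminus S}$ is an exchange between stable sets of $G\setminus S$, and gluing the fixed $S$-parts back on need not produce stable sets of $G$; so generation of the truncated relation in $I_{G\setminus S}$ does not lift to generation in $I_G$. (Also, applying Proposition~\ref{meyniel_even_pairs}(b) to $G$ itself gives a stable set meeting all maximal cliques of $G$, which need not be contained in the union of the stable sets occurring in a given binomial.) The paper sidesteps all of this: it applies \textbf{COLOR} to the induced subgraph $H$ on $\bigcup_\ell S_{i_\ell}$, uses the exchanges to force $S\subseteq S_{i_1}$, and then the fact that $S$ meets every maximal clique of $H$ forces $S_{i_1}=S$ exactly; doing the same on the other monomial gives $S_{j_1}=S$, contradicting irreducibility of a minimal non-quadratically-generated binomial. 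No induction on the vertex set and no passage to $G\setminus S$ is needed.
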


\begin{proof}
Let $f= x_{i_1} \dots x_{i_r} - x_{j_1} \dots x_{j_r} \in I_G$ be a binomial of degree $r \ge 3$
such that $f$ is not generated by binomials in $I_G$ of degree $\le r-1$.
Note that $i_1,\ldots,i_r$ and $j_1, \ldots,j_r$ are not necessarily distinct.
Since $I_G$ is prime, $f$ must be irreducible.
We consider the induced subgraph $H$ of $G$ on the vertex set $\bigcup_{\ell=1}^r S_{i_\ell}$
$(=\bigcup_{\ell=1}^r S_{j_\ell})$.
Since $G$ is Meyniel, so is $H$.
Let $S=\{v_0,w_0,w_1,\dots, w_s\}$ be a stable set of $H$ meeting all maximal cliques of $H$
obtained by the algorithm {\bf COLOR} with rule ${\mathcal R}$.
Note that
\begin{equation*}
S \subset \bigcup_{\ell=1}^r S_{i_\ell} =\bigcup_{\ell=1}^r S_{j_\ell}
\end{equation*}
since $H$ is the induced subgraph of $G$ on $\bigcup_{\ell=1}^r S_{i_\ell}$.
We may assume that $v_0$ belongs to $S_{i_1}$.
Suppose that $S \not\subset S_{i_1}$.
Let $u = \min\{ \mu : w_\mu \notin S_{i_1}\}$.
Then we may assume that $w_u \in S_{i_2}$.
Let $H'$ be the induced subgraph of $G$ on the vertex set $S_{i_1} \cup S_{i_2}$.
Since both $S_{i_1}$ and $S_{i_2}$ are stable sets of $G$, it follows that 
$H'$ is the disjoint union of graphs $G_1$ and $G_2$, where
$G_1$ is a bipartite graph on the vertex set $(S_{i_1} \setminus S_{i_2}) \cup (S_{i_2} \setminus S_{i_1})$,
 and $G_2$ is an empty graph on the vertex set $S_{i_1} \cap S_{i_2}$ ($G_2$ is not needed when $S_{i_1} \cap S_{i_2} = \emptyset$).
Let $H_1,\dots, H_t$ be the connected components of $G_1$.
Since each $H_k$ is a connected bipartite graph, the vertex set of
$H_k$ has the unique bipartition $V_1^{(k)} \cup V_2^{(k)}$
with $V_1^{(k)} \subset S_{i_1} \setminus S_{i_2}$ and $V_2^{(k)}\subset S_{i_2} \setminus S_{i_1}$.
Then there exists $ 1\le \alpha \le t$ such that $w_u$ belongs to $ V_2^{(\alpha)}$.
Suppose that $p \in \{v_0, w_0,\dots, w_{u-1}\} \ (\subset  S_{i_1})$
belongs to $H_\alpha$.
Let $H''$ be the bipartite graph obtained by contracting the vertices $\{v_0, w_0,\dots, w_{u-1}\}$ 
into $v_u$ in $H'$.
Then $v_u$ and $w_u$ belong to the same connected component of $H''$,
and hence there exists an induced odd path from $v_u$ to $w_u$ in $H''$.
Since $H''$ is an induced subgraph of $G_u$, this contradicts that $\{v_u ,w_u\}$ is an even pair in $G_u$.
Hence $p \notin H_\alpha$.
It then follows that 
\begin{eqnarray*}
S_{i_1'} &=& \left(S_{i_1} \setminus V_1^{(\alpha)} \right) \cup V_2^{(\alpha)}\\
S_{i_2'} &=& \left(S_{i_2} \setminus V_2^{(\alpha)} \right) \cup V_1^{(\alpha)}
\end{eqnarray*}
are stable sets of $G$
satisfying $\{v_0, w_0,\dots, w_{u}\} \subset S_{i_1'}$ and $x_{i_1} x_{i_2} -  x_{i_1'} x_{i_2'} \in I_G$.
Then 
$$
f
=
x_{i_3} \dots x_{i_r}  (x_{i_1} x_{i_2} -  x_{i_1'} x_{i_2'} )
+ f',
$$ 
where $f'=  x_{i_1'} x_{i_2'}  x_{i_3} \dots x_{i_r} - x_{j_1} \dots x_{j_r}
\in I_G$.
We may replace $f$ with $f'$.
Applying this procedure repeatedly, 
we may assume that $S$ is a subset of $S_{i_1}$.
Since $S$ meets all maximal cliques of $H$, each vertex of $H \setminus S$ is adjacent to a vertex in $S$. 
Thus $S_{i_1} = S$.
Applying the same procedure to $ x_{j_1} \dots x_{j_r}$, we may also assume that
$S_{j_1} = S$.
This contradicts that $f$ is irreducible. 
\end{proof}

\section{Quadratic Gr\"obner bases for perfectly orderable graphs}

Let $G$ be a graph on the vertex set $\{v_1,\dots,v_n\}$.
An ordering $(v_1,\dots,v_n)$ of the vertex set of $G$ is called {\em perfect} \cite[Definition~5.6.1]{graphclasses}
if, for any induced (ordered) subgraph $H$ of $G$,
the number of colors used by
Greedy Coloring \cite[Algorithm~5.6.1]{graphclasses} on $H$ 
coincides with the chromatic number of $H$.
It is known \cite[Theorem~1]{perfectlyorderable} that
a vertex ordering $<$ of a graph $G$ is perfect if and only if $G$ contains no induced $P_4$
$abcd$ such that $a<b$ and $d<c$.
A graph is called {\em perfectly orderable} if there exists a perfect ordering $(v_1,\dots,v_n)$ of 
the vertex set of $G$.
For example, the following graphs are perfectly orderable:
\begin{itemize}
\item
comparability graphs of finite posets (including bipartite graphs);
\item
chordal graphs;
\item
complements of chordal graphs;
\item
weakly chordal graphs with no $P_5$ (\cite{Hayward1});
\item
bull-free graphs with no odd holes and no antiholes (\cite{Hayward2}).
\end{itemize}
For a bipartite graph $G$, the complement of $G$ is perfectly orderable if and only if  $G$ is chordal bipartite.
It is known \cite{perfectlycontractile} that every perfectly orderable graph is
perfectly contractile.
A graph $G$ is called {\em strongly perfect} if
every induced subgraph $H$ of $G$ has a stable set meeting all maximal cliques of $H$. 
It is known \cite[Theorem~2]{perfectlyorderable} that every perfectly orderable graph is strongly perfect.

Let $G$ be a perfectly orderable graph with a perfect ordering $(v_1,\dots,v_n)$.
Then we define the ordering of stable sets $(S_1,\dots, S_m)$ by
$$
\tb^{\rho(S_1)} >_{\rm lex} \dots >_{\rm lex} \tb^{\rho(S_m)},
$$
where $>_{\rm lex}$ is a lexicographic order on $K[\tb]$ induced by the ordering
$t_1 > \dots > t_n$.
Let $>_{\rm rev}$ be a reverse lexicographic order on $K[\xb]$ induced by the ordering
$x_1 < \dots < x_m$.
Then we have the following.

\begin{Theorem}
\label{thm:perfectorder}
Let $G$ be a perfectly orderable graph.
Then the initial ideal of $I_G$ with respect to 
the reverse lexicographic order $>_{\rm rev}$ defined above is squarefree and quadratic.
\end{Theorem}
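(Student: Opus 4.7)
The plan is to prove that the set of all quadratic binomials in $I_G$ forms a Gr\"obner basis of $I_G$ with respect to $>_{\rm rev}$; squarefreeness of the initial ideal then follows from Remark~1.1(a). We will argue by contradiction, adapting the strategy of the proof of Theorem~\ref{thm:meyniel} with additional bookkeeping of the reverse lexicographic order induced by the perfect ordering.

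Suppose the set of quadratic binomials is not a Gr\"obner basis, and choose an irreducible binomial $f = u - v \in I_G$ of minimal degree $r \geq 3$ whose initial monomial $v = x_{j_1}\cdots x_{j_r}$ is divisible by no quadratic initial monomial. The induced subgraph $H := G\bigl[\bigcup_\ell S_{j_\ell}\bigr]$ inherits the restriction of the perfect ordering of $G$, and since every perfectly orderable graph is strongly perfect (\cite[Theorem~2]{perfectlyorderable}), $H$ admits a stable set meeting every maximal clique. Let $S$ be the lexicographically greatest such stable set with respect to $\rho$ (equivalently, the one built greedily along the perfect order of $H$). Following the proof of Theorem~\ref{thm:meyniel}, and using that perfectly orderable graphs are perfectly contractile so that the even-pair property is available, we iteratively perform swaps of pairs $(S_{j_\alpha}, S_{j_\beta}) \mapsto (S_{j_\alpha'}, S_{j_\beta'})$ along bipartite connected components of $G\bigl[(S_{j_\alpha} \cup S_{j_\beta}) \setminus (S_{j_\alpha} \cap S_{j_\beta})\bigr]$, enlarging $S \cap S_{j_\alpha}$ until $S \subseteq S_{j_\ell}$ for some $\ell$; since $S$ meets every maximal clique of $H$, $S = S_{j_\ell}$. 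Applying the same argument to $u$ forces $S = S_{i_{\ell'}}$, contradicting the irreducibility of $f$.

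The crucial new ingredient is to realize each swap as a quadratic binomial $x_{j_\alpha} x_{j_\beta} - x_{j_\alpha'} x_{j_\beta'} \in I_G$ whose $>_{\rm rev}$-initial monomial divides $v$. Order $\{j_\alpha, j_\beta\}$ so that $j_\alpha < j_\beta$. The greedy construction of $S$ enables us to select the bipartite component so that its smallest vertex $v_{\min}$ in the perfect order lies in $V_2 \subseteq S_{j_\beta}$. Then $v_{\min}$ is the first coordinate where $S_{j_\alpha}$ and $S_{j_\alpha'}$ differ, so
\[
\rho(S_{j_\alpha'}) >_{\rm lex} \rho(S_{j_\alpha}), \qquad \rho(S_{j_\beta'}) <_{\rm lex} \rho(S_{j_\beta}),
\]
which gives $j_\alpha' < j_\alpha < j_\beta < j_\beta'$, and thus $\max(j_\alpha', j_\beta') > \max(j_\alpha, j_\beta)$. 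In $>_{\rm rev}$ with $x_1 < \cdots < x_m$, a monomial of smaller maximum index is rev-lex larger, so $x_{j_\alpha} x_{j_\beta}$ is the $>_{\rm rev}$-initial monomial of this quadratic binomial; since it divides $v$, we reach the desired contradiction.

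The main obstacle is to guarantee, at each step of the iteration, the existence of indices $\alpha, \beta$ and a bipartite component with smallest vertex on the $S_{j_\beta}$-side. The key is that when $S \not\subseteq S_{j_\gamma}$ for any $\gamma$, the smallest (in perfect order) vertex $w$ of $S$ that is missing from some $S_{j_\alpha}$ must lie in a different stable set $S_{j_\beta}$; the greedy character of $S$ then forces $w$ to be the smallest vertex of its bipartite component in $G\bigl[(S_{j_\alpha} \cup S_{j_\beta}) \setminus (S_{j_\alpha} \cap S_{j_\beta})\bigr]$, and the even-pair property excludes any element of $S \cap S_{j_\alpha}$ from the same component, ensuring that the swap genuinely enlarges $S \cap S_{j_\alpha}$. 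Verifying this combinatorial configuration carefully throughout the iteration is the technical heart of the argument.
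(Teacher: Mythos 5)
Your proposal diverges substantially from the paper's proof, and the divergence introduces genuine gaps. The paper does not iterate swaps at all: for each pair $i<j$ it builds, in one shot, the greedy stable set $S_k$ of the bipartite graph $H_{ij}$ on $S_i\cup S_j$ along the perfect order (this is the stable set from Chv\'atal's proof that perfectly orderable graphs are strongly perfect, and it meets every maximal clique of $H_{ij}$), and then uses the clique-inequality description of the stable set polytope of a perfect graph to show that $\rho(S_i)+\rho(S_j)-\rho(S_k)$ is again the incidence vector of a stable set $S_\ell$. This yields a distinguished quadratic binomial $f_{ij}=x_ix_j-x_kx_\ell$ with $k\le i,j,\ell$, whence ${\rm in}_{>_{\rm rev}}(f_{ij})=x_ix_j$, and the Gr\"obner basis property follows from a short argument showing $f_{j_1 j_{k'}}\neq 0$ for a suitable $k'$. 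None of this appears in your proposal.

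The concrete gaps in your route are these. First, you invoke ``the even-pair property'' of perfectly orderable graphs via perfect contractility. In the proof of Theorem~\ref{thm:meyniel} the specific even pair $\{v_u,w_u\}$ --- between the contraction of the already-secured initial segment of $S$ and the next vertex of $S$ --- is supplied by Hertz's algorithm {\bf COLOR} with rule ${\mathcal R}$, which is particular to Meyniel graphs. Perfect contractility only guarantees that \emph{some} sequence of even-pair contractions reduces the graph to a clique; it does not hand you an even pair between the two specific (contracted) vertices your swap argument needs, so the step excluding $p\in S\cap S_{j_\alpha}$ from the component of $w$ has no justification. Second, the entire orientation claim --- that one can always choose $\alpha,\beta$ and a bipartite component whose smallest vertex in the perfect order lies on the $S_{j_\beta}$ side, so that $j_{\alpha'}<j_\alpha$ and the quadratic binomial has initial monomial $x_{j_\alpha}x_{j_\beta}$ dividing $v$ --- is exactly what you defer as ``the technical heart,'' i.e., it is asserted rather than proved; without it the contradiction never materializes. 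Third, a smaller point: your stated rule for $>_{\rm rev}$ (``a monomial of smaller maximum index is rev-lex larger'') is not the rule for the reverse lexicographic order induced by $x_1<\dots<x_m$; the comparison is decided by the smallest-indexed variable occurring to different powers. Your conclusion survives only because $j_{\alpha'}$ happens to be the minimum of the four indices, but the reasoning as written would give wrong answers in other configurations.
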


\begin{proof}
Given two stable sets $S_i$ and $S_j$ with $1 \le i < j \le m$,
let $H_{ij}$ be the induced subgraph of $G$ induced by the vertex set $S_i \cup S_j$.
Note that $H_{ij}$ is bipartite.
Let $(v_{i_1}, \dots, v_{i_p})$ be the induced perfect ordering of $H_{ij}$.
Then we define the stable set $S$ ($\subset S_i \cup S_j$) as follows:
scan $(v_{i_1}, \dots, v_{i_p})$ from $v_{i_1}$ to $v_{i_p}$,
 and place each $v_{i_q}$ in $S$ if and only if none of its neighbors $v_{i_{q'}}$ ($q' < q$) has been placed in $S$.
In particular, we have $v_{i_1} \in S$.
In the proof of \cite[Theorem~2]{perfectlyorderable},
it is shown that $S$ is a stable set of $H_{ij}$ meeting all maximal cliques of $H_{ij}$
(remark that, since $H_{ij}$ is bipartite, maximal cliques of $H_{ij}$ are 
 edges and isolated vertices). 
Since $S$ is a stable set of $G$, $S=S_k$ for some $1 \le k \le m$.
We now show that $\rho(S_i) + \rho(S_j) - \rho(S_k) = \rho(S_\ell)$
for some $1 \le \ell \le m$.
Since $H_{ij}$ is perfect, by \cite[Theorem~3.1]{facet_perfect_graphs},
the stable set polytope $\Qc_{H_{ij}}$ of $H_{ij}$ is the set of all vectors
$(y_1,\dots, y_n)$ satisfying
\begin{eqnarray*}
y_i \ge 0, & \mbox{ for all } i \in [n]\\
\sum_{i \in W} y_i \le 1, & \mbox{ for any maximal clique } W \mbox{ of } H_{ij}.
\end{eqnarray*}
Let $(y_1,\dots, y_n) = \rho(S_i) + \rho(S_j)$.
Since $(y_1,\dots, y_n)$ belongs to $2 \Qc_{H_{ij}}$,
we have 
$$
\sum_{i \in W} y_i \le 2,
$$
for any maximal clique $W$ of $H_{ij}$.
Moreover, since $S_k$ meets all maximal cliques of $H_{ij}$,
$(y_1',\dots, y_n') = \rho(S_i) + \rho(S_j)  - \rho(S_k) $ satisfies the inequalities
$$
\sum_{i \in W} y_i' \le 1,
$$
for any maximal clique $W$ of $H_{ij}$.
In addition, $y_i' \ge 0$ follows from $S_k \subset S_i \cup S_j$.
Thus $(y_1',\dots, y_n') $ belongs to $\Qc_{H_{ij}} \cap \ZZ^n$.
Since $\Qc_{H_{ij}}$ is a $(0,1)$-polytope, $\Qc_{H_{ij}} \cap \ZZ^n$
coincides with the vertex set of $\Qc_{H_{ij}}$, and hence
 $(y_1',\dots, y_n') $ is equal to $\rho(S_\ell)$
for some stable set  $S_\ell$ of $H_{ij}$.
Then $f_{ij} := x_i x_j - x_k x_\ell$ belongs to $I_G$.
From the choice of $S_k$, we have $k \le i,j, \ell$.
Note that $f_{ij} = 0$ if and only if $i=k$.
If $i \ne k$, then the initial monomial of $f_{ij}$ is $x_i x_j$ with respect to $>_{\rm rev}$.

Let $\Gc = \{0 \neq f_{ij} \in K[\xb] : 1 \le i < j \le m\}$.
Suppose that $\Gc$ is not a Gr\"obner basis of $I_G$ with respect to $>_{\rm rev}$.
By \cite[Theorem~3.11]{BinomialIdeals}, there exists a binomial $f=u - v \in I_G$ such that
neither $u$ nor $v$ belong to ${\rm in}_{>_{\rm rev}}(\Gc)$.
Since $I_G$ is prime, we may assume that $f$ is irreducible.
Let $u = x_{i_1} \dots x_{i_r}$ and $v = x_{j_1} \dots x_{j_r}$,
with $i_1 \le \dots \le i_r$ and $j_1 \le \dots \le j_r$.
We may assume that $i_1 < j_1$.
Then the left-most nonzero component of $\rho(S_{i_1}) - \rho(S_{j_1}) $ is positive.
Suppose that the $\alpha$-th component of $\rho(S_{i_1}) - \rho(S_{j_1}) $
is the left-most nonzero component.
Since $f$ belongs to $I_G$, we have
$$
\sum_{k=1}^r \rho(S_{i_k})
=
\sum_{k=1}^r \rho(S_{j_k}).
$$
Then there exists $1 < k' \le r$ such that $v_\alpha \in S_{j_{k'}}$.
Suppose that $f_{j_1 j_{k'}} = 0$.
Then $S_{j_1}$ is the stable set  that meets all maximal cliques of $H_{j_1 j_{k'}}$
obtained by the above procedure.
The choice of $v_\alpha$ guarantees that there exist no neighbors of $v_\alpha$ in 
$\{v_1,\dots, v_{\alpha-1}\} \cap S_{j_1}$ since
$$
\{v_1,\dots, v_{\alpha-1}\} \cap S_{j_1}
=\{v_1,\dots, v_{\alpha-1}\} \cap S_{i_1}.
$$ 
Hence $\alpha \in S_{j_1} \cup S_{j_{k'}}$ should be included in $S_{j_1}$
in the above procedure.
This is a contradiction.
Thus we have $f_{j_1 j_{k'}} \neq 0$.
This contradicts that $v$ does not belong to ${\rm in}_{>_{\rm rev}}(\Gc)$.
\end{proof}

\begin{Remark}
Theorem~\ref{thm:perfectorder} is a generalization of results on several toric ideals.
For example, the existence of a squarefree quadratic initial ideal for 
comparability graphs of posets \cite{HibiChainTriangulation},
and the complement of chordal bipartite graphs \cite{MOS} are guaranteed by this theorem.
Moreover, if $G$ is the complement of a bipartite graph $H$,
then the edge polytope \cite[Chapter~5]{BinomialIdeals} of $H$ is a face of $\Qc_G$.
Hence the existence of a squarefree quadratic initial ideal for the toric ideals of 
edge polytopes of chordal bipartite graphs \cite{OHkb} is a corollary of Theorem~\ref{thm:perfectorder}.
\end{Remark}

From Proposition \ref{clique_sum} and Theorem~\ref{thm:perfectorder},
we have the following.

\begin{Corollary}
\label{clique_sum_p.o.}
Suppose that a graph $G$ is obtained recursively
by gluing along cliques starting from perfectly orderable graphs.
Then there exists a monomial order such that
the initial ideal of $I_G$ is squarefree and quadratic.
\end{Corollary}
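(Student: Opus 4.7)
The plan is to proceed by induction on the number $N$ of gluing operations used to construct $G$ from perfectly orderable graphs. When $N=0$, the graph $G$ is itself perfectly orderable and the conclusion is exactly the content of Theorem~\ref{thm:perfectorder} (the concrete monomial order being the reverse lexicographic order $>_{\mathrm{rev}}$ built there from a perfect vertex ordering of $G$).

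For the inductive step with $N \geq 1$, I would decompose $G$ as $G = H_1 \cup H_2$, where $H_1 \cap H_2$ is a common clique of $H_1$ and $H_2$ (a clique cutset of $G$), and each of $H_1, H_2$ can be constructed by strictly fewer than $N$ gluings from perfectly orderable graphs. By the inductive hypothesis, both $I_{H_1}$ and $I_{H_2}$ admit squarefree quadratic initial ideals with respect to some monomial orders on their respective polynomial rings in the stable-set variables. Applying the ``if'' direction of Proposition~\ref{clique_sum} then produces a monomial order on $K[\xb]$ with respect to which the initial ideal of $I_G$ is squarefree and quadratic, completing the induction.

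There is essentially no additional obstacle here: Proposition~\ref{clique_sum} is already phrased as an ``if and only if'' for precisely this property, and it does not require any compatibility between the two monomial orders used on the pieces $H_1$ and $H_2$. All the genuine content sits in Theorem~\ref{thm:perfectorder} (base case) and in Proposition~\ref{clique_sum} (inductive step), so the corollary reduces to a clean bookkeeping argument on the gluing tree, whose finiteness is built into the hypothesis ``obtained recursively by gluing along cliques starting from perfectly orderable graphs''.
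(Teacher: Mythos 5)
Your proposal is correct and is essentially the paper's own argument: the paper derives the corollary directly from Proposition~\ref{clique_sum} and Theorem~\ref{thm:perfectorder}, which is exactly the induction on the number of gluings that you spell out. No further comment is needed.
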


A graph $G$ is called {\em clique separable} if $G$ is obtained by
successive gluing along cliques starting with graphs of Type 1 or 2:

\smallskip

Type 1:
the join of a bipartite graph with more than 3 vertices with
a complete graph; 

Type 2:
a complete multipartite graph.

\smallskip

\noindent
Bertschi \cite{perfectlycontractile} proved that every clique separable graph
is perfectly contractile.

\begin{Theorem}
\label{thm:cliquesep}
Let $G$ be a clique separable graph.
Then there exists a monomial order such that
the initial ideal of $I_G$ is squarefree and quadratic.
\end{Theorem}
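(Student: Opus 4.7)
The plan is to reduce the theorem to Corollary~\ref{clique_sum_p.o.} by proving that the two kinds of building blocks appearing in the definition of a clique separable graph---graphs of Type~1 and Type~2---are themselves perfectly orderable. Once this is established, the recursive gluing-along-cliques construction is absorbed by the corollary and nothing further is needed.

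For Type~2, I would use the fact that a complete multipartite graph is the complement of a disjoint union of complete graphs. Such disjoint unions are obviously $P_4$-free, and since $P_4$ is self-complementary, complete multipartite graphs are $P_4$-free as well. By Chv\'atal's forbidden-induced-subgraph criterion (\cite[Theorem~1]{perfectlyorderable}), a $P_4$-free graph is perfectly ordered by \emph{any} vertex ordering, so the Type~2 case is immediate.

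For Type~1 graphs $G = B \vee K_n$, the key ingredient will be a ``join lemma'': every induced $P_4$ of a join $G_1 \vee G_2$ lies entirely in $G_1$ or entirely in $G_2$. This follows from the observation that any vertex of $G_1$ inside the $P_4$ is adjacent to every vertex of $G_2$ inside the $P_4$, which would force all such cross-edges into consecutive positions; a short case check on the four vertices of $P_4$ rules out the remaining possibilities. Because $K_n$ is itself $P_4$-free, all induced $P_4$'s of $G$ sit inside $B$. I would then order $V(G)$ by listing one part $V_1$ of the bipartition of $B$, then the other part $V_2$, and finally $V(K_n)$ (each block in any internal order). Every induced $P_4$ $abcd$ in $B$ alternates between $V_1$ and $V_2$, and a direct inspection of the two possible alternation patterns shows that the forbidden configuration $a < b$ and $d < c$ cannot occur, so the ordering is perfect.

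The main obstacle is really just the case analysis behind the join lemma, which is routine. Once Type~1 and Type~2 graphs are both seen to be perfectly orderable, Corollary~\ref{clique_sum_p.o.} directly supplies a monomial order with respect to which the initial ideal of $I_G$ is squarefree and quadratic for every clique separable graph $G$.
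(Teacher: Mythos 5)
Your proposal is correct and follows essentially the same route as the paper: reduce to Corollary~\ref{clique_sum_p.o.} by showing that Type~1 and Type~2 graphs are perfectly orderable, using for Type~1 the observation that no vertex of the complete factor of the join can lie on an induced $P_4$. The only (immaterial) difference is that you verify perfect orderability of the building blocks by exhibiting explicit orderings and invoking Chv\'atal's $P_4$-criterion, whereas the paper cites the known facts that bipartite graphs and complements of chordal graphs are perfectly orderable.
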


\begin{proof}
Suppose that $H$ is a graph of Type 1,
that is, $H$ is the join of a bipartite graph $H_1$ 
with more than 3 vertices with
a complete graph $H_2$. 
Since $H_1$ is bipartite, $H_1$ is perfectly orderable, 
and hence the vertex set of $H_1$ admits a perfect order.
Note that any vertex of $H_2$ is adjacent to any other vertex of $H$.
Thus no vertices of $H_2$ appear in an induced path $P_4$ in $H$.
Hence we can extend this order to a perfect order for $H$.
If a graph $H$ is of Type 2, then $H$ is the complement of
a disjoint union of complete graphs.
Since a disjoint union of complete graphs is a chordal graph,
$H$ is perfectly orderable.
Therefore any clique separable graph $G$ is 
obtained recursively
by gluing along cliques starting from perfectly orderable graphs.
Thus the desired conclusion follows from Corollary~\ref{clique_sum_p.o.}.
\end{proof}

Let $G(n)$ denote the set of all graphs on the vertex set $[n]$.
Given a property $P$, let $G_P(n)$ denote the set of all graphs on the vertex set $[n]$
with $P$.
Then we say that {\em almost all graphs} have the property $P$
if $\lim_{n \rightarrow \infty} |G_P(n)|/|G(n)|$ exists and is equal to $1$.
A graph $G$ is called {\em generalized split} if either $G$ or $\overline{G}$
satisfies the following:
\begin{itemize}
\item[$(*)$]
The vertex set of the graph is partitioned into disjoint cliques $C_0,C_1,\dots,C_k$
such that there are no edges between $C_i$ and $C_j$ for any $1 \le i < j \le k$.
\end{itemize}
Pr\"omel and Steger \cite{PrSt} proved that 
(i) generalized split graphs are perfect, and
(ii) almost all perfect graphs are generalized split.

\begin{Proposition}
\label{almostall}
Let $G$ be a generalized split graph with no antiholes.
Then each connected component of $G$ is obtained recursively
by gluing along cliques starting from perfectly orderable graphs.
In particular, there exists a monomial order such that
the initial ideal of $I_G$ is squarefree and quadratic.
\end{Proposition}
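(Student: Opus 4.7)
I split into two cases based on the definition of generalized split: either $G$ itself admits the partition $(*)$, or its complement $\overline{G}$ does. The plan is to show in each case that every connected component of $G$ is either perfectly orderable outright or can be recursively decomposed via clique cutsets into perfectly orderable pieces; the final claim then follows immediately from Corollary~\ref{clique_sum_p.o.}.

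\textbf{Case 1: $G$ admits $(*)$.} Let $H$ be a connected component of $G$, set $A = H \cap C_0$, and let $D_1, \ldots, D_s$ be the non-empty $H \cap C_i$ for $i \ge 1$. If $A = \emptyset$, connectivity forces $H$ into a single $C_i$, so $H$ is a clique, hence perfectly orderable. If $A \ne \emptyset$ and $s \le 1$, then $H$ is a single clique (when $s=0$) or a union of two cliques $A \cup D_1$; in the latter case $\overline{H}$ is bipartite with parts $A$ and $D_1$, and the no-antihole hypothesis forces $\overline{H}$ to have no induced cycle of length $\ge 5$, so $\overline{H}$ is chordal bipartite and $H$ is its (perfectly orderable) complement. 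If $s \ge 2$, the clique $A$ is a cutset separating $D_1$ from $D_2 \cup \cdots \cup D_s$, which lets me write $H$ as the clique gluing of $H[A \cup D_1]$ and $H[A \cup D_2 \cup \cdots \cup D_s]$ along $A$ and then recurse on~$s$.

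\textbf{Case 2: $\overline{G}$ admits $(*)$.} Now in $G$ the sets $C_0, C_1, \ldots, C_k$ are stable, all edges between distinct $C_i, C_j$ with $i, j \ge 1$ are present, and edges between $C_0$ and the rest are arbitrary. I claim $G$ is perfectly orderable under any ordering listing the vertices of $C_1 \cup \cdots \cup C_k$ before those of $C_0$. To verify this, I go through every induced $P_4$ $abcd$ in $G$ by cases on $\{a,b,c,d\} \cap C_0$: configurations placing two adjacent path-vertices inside $C_0$ are ruled out by the stable-set property, and configurations placing exactly one of the middle vertices $b, c$ in $C_0$ are ruled out because the three required non-edges $ac, ad, bd$ would force three vertices into a single stable set, contradicting an edge of the path. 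In every surviving configuration at least one endpoint ($a$ or $d$) lies in $C_0$, so it is larger than the adjacent middle vertex in my ordering, defeating the forbidden pattern $a < b$ and $d < c$. The case with no vertex in $C_0$ reduces to a complete multipartite subgraph, which contains no induced $P_4$ at all.

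\textbf{Conclusion and main obstacle.} In either case the connected components are thus expressible as iterated clique gluings of perfectly orderable graphs, and Corollary~\ref{clique_sum_p.o.} yields the initial-ideal conclusion. The principal difficulty is the $P_4$ case analysis in Case 2, where one must enumerate every distribution of the four path-vertices among the classes and verify both realizability of the $P_4$ and that the ordering defeats it. The Case 1 reduction is comparatively routine once the clique cutset $A$ is identified, though its $s = 1$ base case relies essentially on the no-antihole hypothesis to produce a chordal bipartite complement.
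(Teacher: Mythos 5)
Your proof is correct and its overall architecture matches the paper's: split on whether $G$ or $\overline{G}$ satisfies $(*)$; in the former case cut along the clique $C_0$ to reduce to two-clique pieces whose complements are (by the no-antihole hypothesis) chordal bipartite, hence perfectly orderable; then invoke Corollary~\ref{clique_sum_p.o.}. Your Case 1 is essentially the paper's argument, just written per connected component and with the degenerate cases ($A=\emptyset$, $s\le 1$) made explicit. The one genuine difference is Case 2: where the paper simply cites an external result (the proof of Corollary~2 in the reference \cite{BlEn}) to conclude that $G$ is perfectly orderable when $\overline{G}$ satisfies $(*)$, you construct an explicit perfect ordering (all of $C_1\cup\dots\cup C_k$ before $C_0$) and verify the no-obstruction condition by hand. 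That buys self-containedness at the cost of a case analysis; the analysis does go through, though your phrasing is slightly loose --- the configurations with exactly one middle vertex in $C_0$ are only \emph{impossible} when the other three path-vertices lie in $C_1\cup\dots\cup C_k$ (e.g.\ $b,d\in C_0$ with $a,c$ in one part is realizable), but those surviving configurations are still defeated because an endpoint in $C_0$ is larger than its neighbor, exactly as your final sentence of Case 2 says. So the argument is sound as written, modulo that minor imprecision.
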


\begin{proof}
We may assume that $G$ is connected.
If $\overline{G}$ satisfies condition $(*)$, then 
it is shown in \cite[Proof of Corollary~2]{BlEn} that $G$ is perfectly orderable.
Suppose that $G$ satisfies condition $(*)$ and has no antiholes.
Then $G$ is obtained recursively
by gluing along cliques starting from $G_1, \dots, G_k$, where
$G_i$ is the induced subgraph of $G$ on the vertex set $V(C_0) \cup V(C_i)$
for $1 \le i \le k$.
Since the complement of $G_i$ is a bipartite graph and $G_i$ has no antiholes,
we have that the complement of $G_i$ is chordal bipartite.
Hence $G_i$ is perfectly orderable.
Thus $G$ is obtained recursively
by gluing along cliques starting from perfectly orderable graphs.
Therefore the desired conclusion follows from Corollary~\ref{clique_sum_p.o.}. 
\end{proof}

\begin{Remark}
From Theorem ~\ref{hitsuyou} and Proposition~\ref{almostall},
it follows that any generalized split graph with no antiholes 
has no odd stretchers as induced subgraphs.
\end{Remark}

\bibliographystyle{plain}
\bibliography{Bibliography.bib}

\begin{thebibliography}{10}

\bibitem{perfect}
C.~Berge.
\newblock Les probl\`{e}mes de coloration en th\'{e}orie des graphes.
\newblock {\em Publ. Inst. Stat. Univ. Paris}, 9:123--160, 1960.

\bibitem{perfectlycontractile}
M.~E. Bertschi.
\newblock Perfectly contractile graphs.
\newblock {\em J. Combin. Theory Ser. B}, 50:222--230, 1990.

\bibitem{BlEn}
M.~Blidia and K.~Engel.
\newblock Perfectly orderable graphs and almost all perfect graphs are kernel
  {$M$}-solvable.
\newblock {\em Graphs Combin.}, 8:103--108, 1992.

\bibitem{graphclasses}
A.~Brandst\"adt, V.~B. Le, and J.~P. Spinrad.
\newblock {\em Graph {C}lasses: {A} {S}urvey}, volume~3 of {\em SIAM Monographs
  on Discrete Math. Appl.}
\newblock SIAM, 1999.

\bibitem{perfecttheorem}
M.~Chudnovsky, N.~Robertson, P.~Seymour, and R.~Thomas.
\newblock The strong perfect graph theorem.
\newblock {\em Ann. of Math.}, 164:51--229, 2006.

\bibitem{facet_perfect_graphs}
V.~Chv\'atal.
\newblock On certain polytopes associated with graphs.
\newblock {\em J. Combin. Theory Ser. B}, 18:138--154, 1975.

\bibitem{perfectlyorderable}
V.~Chv\'atal.
\newblock Perfectly ordered graphs.
\newblock In C.~Berge and V.~Chv\'atal, editors, {\em Topics on {P}erfect
  {G}raphs}, volume~21 of {\em Ann. Discrete Math.}, pages 63--65, 1984.

\bibitem{bullfree}
C.~M.~H. de~Figueiredo, F.~Maffray, and O.~Porto.
\newblock On the structure of bull-free perfect graphs.
\newblock {\em Graphs Combin.}, 13:31--55, 1997.

\bibitem{EnNo}
A.~Engstr\"om and P.~Nor\'en.
\newblock Ideals of graph homomorphisms.
\newblock {\em Ann. Comb.}, 17:71--103, 2013.

\bibitem{pc_conjecture}
H.~Everett and B.~A. Reed.
\newblock Problem session on parity problems.
\newblock In {\em Perfect Graphs Workshop}. Princeton University, New Jersey,
  June 1993.

\bibitem{GPT}
J.~Gouveia, P.~A. Parrilo, and R.~R. Thomas.
\newblock Theta bodies for polynomial ideals.
\newblock {\em SIAM J. Optim.}, 20:2097--2118, 2010.

\bibitem{Hayward1}
R.~B. Hayward.
\newblock Meyniel weakly triangulated graphs -- {I}: {C}o-perfect orderability.
\newblock {\em Discrete Applied Math.}, 73:199--210, 1997.

\bibitem{Hayward2}
R.~B. Hayward.
\newblock Bull-free weakly chordal perfectly orderable graphs.
\newblock {\em Graphs Combin.}, 17:479--500, 2001.

\bibitem{MeynielColor}
A.~Hertz.
\newblock A fast algorithm for coloring {M}eyniel graphs.
\newblock {\em J. Combin. Theory Ser. B}, 50:231--240, 1990.

\bibitem{BinomialIdeals}
J.~Herzog, T.~Hibi, and H.~Ohsugi.
\newblock {\em Binomial {I}deals}.
\newblock Graduate Text in Mathematics. Springer, 2018.

\bibitem{HibiChainTriangulation}
T.~Hibi and N.~Li.
\newblock Chain polytopes and algebras with straightening laws.
\newblock {\em Acta Math. Vietnam.}, 40:447--452, 2015.

\bibitem{HOTperfect}
T.~Hibi, H.~Ohsugi, and A.~Tsuchiya.
\newblock Integer decomposition property for {C}ayley sums of order and stable
  set polytopes.
\newblock {\em Michigan Math. J.}, 69:765--778, 2020.

\bibitem{HTperfect}
T.~Hibi and A.~Tsuchiya.
\newblock Reflexive polytopes arising from perfect graphs.
\newblock {\em J. Combin. Theory Ser. A}, 157:233--246, 2018.

\bibitem{HTorderperfect}
T.~Hibi and A.~Tsuchiya.
\newblock Reflexive polytopes arising from partially ordered sets and perfect
  graphs.
\newblock {\em J. Algebraic Combin.}, 49:69--81, 2019.

\bibitem{Hoang}
C.~T. Ho\`ang.
\newblock On a conjecture of {M}eyniel.
\newblock {\em J. Combin. Theory Ser. B}, 42:302--312, 1987.

\bibitem{dartfree}
C.~{Linhares Sales} and F.~Maffray.
\newblock On dart-free perfectly contractile graphs.
\newblock {\em Theoret. Comput. Sci.}, 321:171--194, 2004.

\bibitem{planarperfectly}
C.~{Linhares Sales}, F.~Maffray, and B.~A. Reed.
\newblock On planar perfectly contractile graphs.
\newblock {\em Graphs Combin.}, 13:167--187, 1997.

\bibitem{prismfree}
F.~Maffray and N.~Trotignon.
\newblock A class of perfectly contractile graphs.
\newblock {\em J. Combin. Theory Ser. B}, 96:1--19, 2006.

\bibitem{matsuda2018}
K.~Matsuda.
\newblock Non-{K}oszul quadratic {G}orenstein toric rings.
\newblock {\em Math. Scand.}, 123:161--173, 2018.

\bibitem{MOS}
K.~Matsuda, H.~Ohsugi, and K.~Shibata.
\newblock Toric rings and ideals of stable set polytopes.
\newblock {\em Mathematics}, 7:613, 2019.

\bibitem{OHkb}
H.~Ohsugi and T.~Hibi.
\newblock Koszul bipartite graphs.
\newblock {\em Adv. in Appl. Math.}, 22:25--28, 1999.

\bibitem{OHcompressed}
H.~Ohsugi and T.~Hibi.
\newblock Convex polytopes all of whose reverse lexicographic initial ideals
  are squarefree.
\newblock {\em Proc. Amer. Math. Soc.}, 129:2541--2546, 2001.

\bibitem{OHperfect}
H.~Ohsugi and T.~Hibi.
\newblock Reverse lexicographic squarefree initial ideals and {G}orenstein
  {F}ano polytopes.
\newblock {\em J. Commut. Alg.}, 10:171--186, 2018.

\bibitem{PrSt}
H.~J. Pr\"omel and A.~Steger.
\newblock Almost all {B}erge graphs are perfect.
\newblock {\em Combin. Probab. Comput.}, 1:53--79, 1992.

\bibitem{Sul}
S.~Sullivant.
\newblock Compressed polytopes and statistical disclosure limitation.
\newblock {\em Tohoku Math. J.}, 58:433 -- 445, 2006.

\end{thebibliography}

\end{document}